\newcommand{\OMIT}[1]{\textbf{[OMIT:} #1 \ \textbf{ --- end OMIT] }}  
   \renewcommand{\OMIT}[1]{}            
\newcommand{\Authornote}{\renewcommand{\thefootnote}{\fnsymbol{footnote}}}
\newcommand{\authornote}{\Authornote\footnote}
\theoremstyle{plain}
\newtheorem{theorem}{Theorem}
\theoremstyle{definition}
\theoremstyle{remark}
\newtheorem{remark}{Remark}
\newcommand{\refthm}[1]{Theorem~\ref{#1}}
\newcommand{\reffig}[1]{Figure~\ref{#1}}
\newcommand{\reftab}[1]{Table~\ref{#1}}
\newcommand{\finbox}{\nolinebreak\hfill{\small $\blacksquare$}}
\newcommand{\MIN}{\mathop{\mathrm{Minimize}}}
\newcommand{\MAX}{\mathop{\mathrm{Maximize}}}
\newcommand{\ST}{\mathop{\mathrm{subject~to}}}
\newcommand{\domain}{\mathop{\mathrm{dom}}\nolimits}
\renewcommand{\Re}{\ensuremath{\mathbb{R}}}
\newcommand{\overRe}{\ensuremath{\mathbb{R}\cup\{+\infty\}}}
\newcommand{\bi}[1]{\ensuremath{\boldsymbol{#1}}}
\newcommand{\rr}[1]{\ensuremath{\mathrm{#1}}}
\newcommand{\KC}{\ensuremath{\mathcal{K}}}
\newcommand{\SC}{\ensuremath{\mathcal{S}}}
\begin{document}

\begin{center}
  {\Large\bfseries\sffamily%
  Exploiting Lagrange duality for topology optimization }\\
  \medskip
  {\Large\bfseries\sffamily%
  with frictionless unilateral contact }%
  \par%
  \bigskip%
  {
  Yoshihiro Kanno~\authornote[2]{%
    Mathematics and Informatics Center, 
    The University of Tokyo, 
    Hongo 7-3-1, Tokyo 113-8656, Japan.
    E-mail: \texttt{kanno@mist.i.u-tokyo.ac.jp}. 
    }
  }
\end{center}

\begin{abstract}
  This paper presents tractable reformulations of topology optimization 
  problems of structures subject to frictionless unilateral contact 
  conditions. 
  Specifically, we consider stiffness maximization problems of trusses 
  and continua. 
  Based on the Lagrange duality theory, we derive formulations that do 
  not involve complementarity constraints. 
  It is often that a structural optimization problem with contact 
  conditions is formulated as a mathematical programming problem with 
  complementarity constraints (MPCC problem). 
  However, MPCC usually requires special treatment for numerical 
  solution, because it does not satisfy standard constraint 
  qualifications. 
  In contrast, to the   formulation presented in this paper, we can 
  apply standard optimization approaches. 
  Numerical experiments of trusses and continua are performed to examine 
  efficiency of the proposed approach. 
\end{abstract}

\begin{quote}
  \textbf{Keywords}
  \par
  Topology optimization; 
  stiffness maximization; 
  nonsmooth mechanics; 
  unilateral contact; 
  Lagrange duality; 
  second-order cone programming. 
\end{quote}

\section{Introduction}

In this paper, we explore topology optimization of elastic 
structures (specifically, trusses and continua) subject to 
frictionless unilateral contact conditions. 
We assume that the structure can possibly make contact with the surface 
of a rigid obstacle. 
The obstacle is fixed in space, and the contact takes place without 
friction and adhesion. 
In this problem setting, we seek to find a structural design that has 
the maximal stiffness. 
This problem has quite a long history in the field of structural 
optimization; see a survey by \citet{HKP99survey} for early contributions. 
This paper attempts to shed new light on this problem from a perspective 
of Lagrange duality theory. 

An equilibrium state of a unilateral contact problem corresponds 
to a solution of a complementarity problem \citep{MR02,Wri06,Kan11}. 
Therefore, it is often (and also natural) that a structural optimization 
problem with unilateral contact is formulated as a 
{\em mathematical programming problem with complementarity constrains\/}
(MPCC problem); 
such formulations can be found in, e.g., \cite{Tin99,Hil00,SK10,HKP99}. 
Special treatment is usually required to solve an MPCC problem, because 
any feasible solution to an MPCC problem does not satisfy standard 
constraint qualifications \citep{LPR96}. 
A typical remedy is to apply reformulation and smoothing to the 
complementarity constraints so that the structural optimization problem 
can be handled with a conventional {\em nonlinear programming\/} (NLP) 
approach \citep{Tin99,Hil00} or a gradient-based topology optimization 
approach \citep{SK10}. 
Since the state variables (e.g., displacements, contact reactions, etc.) 
of a contact problem are in general nonsmooth with 
respect to design variables, another remedy is to apply a nonsmooth 
optimization method to the structural optimization problem. 
For example, \citet{Sta95} used a bundle method, and 
\citet{PP97} adopted a subgradient method. 
Alternatively, in spite of awareness of this nonsmoothness, 
a conventional gradient-based topology optimization approach (e.g., 
the method of moving asymptotes) were sometimes adopted \citep{KR95,HK12}. 
\citet{Str10,Str12} performed sensitivity analysis by using 
the (augmented) Lagrangian for the contact problem. 
As another approach for dealing with complementarity 
constraints in a structural optimization problem, 
\citet{HKP99} used a penalty interior-point method for MPCC. 

Other than approaches based on complementarity problems, the 
unilateral contact conditions in a structural optimization problem have 
been treated (often in approximated manners) by using 
so-called penalty methods 
(e.g., with gap-elements) \citep{LLK16,MA04,Fan06,KPC01,KYC02}. 
\citet{LM15} adopted the so-called stabilized Lagrangian multiplier 
method in the extended FEM \citep{GMM07}. 

The approach presented in this paper is different from the ones in 
literature cited above. 
Namely, by using the Lagrange duality, we recast the topology 
optimization problem under consideration as a standard optimization 
problem (in the sense that, unlike MPCC problems, it satisfies standard 
constraint qualifications). 
Moreover, this approach does not resort to any approximation, which 
means that an optimal solution of the proposed formulation precisely 
satisfies unilateral contact conditions. 

More specifically, we show that the stiffness maximization problem of 
trusses subject to unilateral contacts can be recast as a 
{\em second-order cone programming\/} (SOCP) problem. 
This is a convex optimization problem, and can be solved efficiently 
with a primal-dual interior-point method \citep{AL12}. 
We next extend the presented formulation to continua. 
Here, we adopt the conventional 
{\em solid isotropic material with penalization\/} (SIMP) 
approach \citep{BS99} with the density filter \citep{BT01,Bou01}. 
Due to the SIMP penalization, the formulation extended to continua 
is nonconvex. 
However, this formulation is suitable for application of a sequential 
SOCP approximation method, where the SIMP penalizations on densities are 
sequentially linearized. 
Thus, the proposed reformulation for continua can also be solved (in, in 
turn, a local sense) with a standard mathematical optimization approach 
(i.e., it does not require any special treatment for complementarity 
constraints). 

From the perspective of convexity in truss problems, the approach 
presented in \citet{KZZ98} is also of interest. 
Namely, they proposed to solve a convex optimization problem, which is 
obtained by eliminating the design variables (i.e., the member 
cross-sectional areas); similar approaches can be found also 
in \cite{BKNZ00,KPR95}. 
In section~\ref{sec:truss.relation}, we establish clear relationship 
between the approach presented in this paper and the one in \cite{KZZ98}, 
by using the Legendre--Fenchel transform and the minimax theorem. 
This analysis provides us with a deeper understanding of the source of 
convexity in these two different approaches. 
From a practical point of view, one significant advantage of the 
approach in this paper over the one in \cite{KZZ98} is that the former 
retains the member cross-sectional areas as optimization variables while 
the latter has eliminated them. 
Therefore, we can incorporate diverse constraints on truss design into 
the formulation in this paper. 
In the numerical experiments described in section~\ref{sec:ex.1}, we 
demonstrate some concrete examples of such additional design constraints.


The paper is organized as follows. 
In section~\ref{sec:truss}, we derive an SOCP formulation for trusses, 
by using the Lagrange duality theory. 
We also pursue investigation of relations between this formulation and 
existing other formulations \citep{KZZ98,KPR95,PP97}. 
Section~\ref{sec:body} extends the formulation presented in 
section~\ref{sec:truss} to continua. 
In section~\ref{sec:ex}, we perform numerical experiments on trusses and 
continua. 
In section~\ref{sec:conclusions}, we draw some conclusions.


In our notation, ${}^{\top}$ denotes the transpose of a vector or a matrix. 
All vectors are column vectors. 
For notational simplicity, we often write $(n+m)$-dimensional column 
vector $(\bi{x}^{\top},\bi{y}^{\top})^{\top}$ consisting of 
$\bi{x} \in \Re^{n}$ and $\bi{y} \in \Re^{m}$ as $(\bi{x},\bi{y})$. 
For a vector $\bi{x} \in \Re^{n}$, the notation $\|\bi{x}\|$ designates 
its Euclidean norm, i.e., $\|\bi{x}\| = \sqrt{\bi{x}^{\top}\bi{x}}$. 
We use $\bi{1}$ to denote an all-ones column vector. 
The notation $\SC^{n}$ designates the set of 
$n \times n$ real symmetric matrices. 
For a proper function $f : \Re^{n} \to \overRe$, its conjugate function 
is defined by 
\begin{align*}
  f^{*}(\bi{x}^{*})
  = \sup\{ \bi{x}^{\top} \bi{x}^{*} - f(\bi{x}) 
  \mid \bi{x} \in \Re^{n} \} , 
\end{align*}
where $f \mapsto f^{*}$ is called the Legendre--Fenchel transform 
(a.k.a.\ the Fenchel transform). 
We use $\Re_{+}^{n}$ to denote the nonnegative orthant, i.e., 
$\Re_{+}^{n} = \{ (x_{1},\dots,x_{n})^{\top} \in \Re^{n} \mid 
  x_{i} \ge 0 \ (i=1,\dots,n) \}$. 
The $n$-dimensional rotated second-order cone, denoted $\KC^{n}$, is 
defined by 
\begin{align*}
  \KC^{n} = \{
  (\bi{x},y,z) \in \Re^{n-2} \times \Re \times \Re
  \mid
  \bi{x}^{\top} \bi{x} \le y z , \
  y \ge 0 , \
  z \ge 0 
  \} . 
\end{align*}
It is easy to verify that condition $(\bi{x},y,z) \in \KC^{n}$ 
is equivalent to 
\begin{align*}
  y + z \ge 
  \begin{Vmatrix}
    \begin{bmatrix}
      y - z \\
      2\bi{x} \\
    \end{bmatrix}
  \end{Vmatrix}
  , 
\end{align*}
namely, $\KC^{n}$ can be expressed with 
an $n$-dimensional second-order cone constraint.

\section{Truss topology optimization}
\label{sec:truss}

This section presents a convex formulation for truss topology 
optimization under the unilateral contact conditions.

\subsection{Problem setting}

Following the conventional ground structure method, consider an initial 
truss that consists of $m$ members and has $n$ degrees of freedom of 
the nodal displacements. 
Suppose that several nodes of the truss can possibly make contact with 
a rigid obstacle, and that the set of contact candidate nodes is 
specified in advance. 
Throughout the paper, we assume that the obstacle is fixed in space, 
and that contact between each contact candidate node and the obstacle 
surface is frictionless and adhesionless. 
Also, we assume linear elasticity and small deformation. 

Let $\bi{u} \in \Re^{n}$ denote the nodal displacement vector. 
We use $c_{e} \in \Re$ to denote the elongation of 
member $e$ $(e=1,\dots,m)$. 
The compatibility relations can be written in the form 
\begin{align}
  c_{e} = \bi{b}_{e}^{\top} \bi{u} , 
  \quad  e=1,\dots,m, 
  \label{eq:truss.compatibility}
\end{align}
where $\bi{b}_{e} \in \Re^{n}$ is a constant vector. 
Let $l_{e}$ and $E$ denote the initial length of member $e$ and the 
Young modulus, respectively. 
We use $x_{1},\dots,x_{m}$ to denote the member cross-sectional areas, 
which are considered design variables of the topology optimization problem. 
The stiffness matrix of the truss is given by 
\begin{align}
  K(\bi{x}) 
  = \sum_{e=1}^{m} \frac{E}{l_{e}} x_{e} \bi{b}_{e} \bi{b}_{e}^{\top} . 
  \label{eq:truss.stiffness}
\end{align}

Let $c$ denote the number of contact candidate nodes. 
We use $g_{j} \ge 0$ $(j=1,\dots,c)$ to denote the initial gap between 
the $j$th contact candidate node and the obstacle surface. 
The non-penetration conditions for these nodes can be written as 
\begin{align}
  \bi{g} - C_{\rr{n}} \bi{u} \ge \bi{0} , 
  \label{eq:truss.non-penetration}
\end{align}
where $C_{\rr{n}} \in \Re^{c \times n}$ is a constant matrix consisting 
of the unit inner normal vectors of the obstacle surface; 
see, e.g., \cite{MR02,Wri06,Kan11} for fundamentals on kinematics 
in contact mechanics. 

In structural optimization, the compliance is conventionally used as 
a measure of global flexibility of a structure. 
If all the prescribed nodal displacements are equal to zero, then the 
compliance can be defined as the external work done by the prescribed 
external nodal forces. 
Therefore, to maximize the stiffness of a structure, it is often that 
the external work is minimized. 
However, when there exists a non-zero prescribed displacement, it is 
known that the external work is not suitable as a measure of structural 
flexibility and the compliance should be defined by using the total 
potential energy (in the manner described below) \citep{KS12,NXC11,Str12}. 
Since the situation considered in this paper can possibly involves 
non-zero prescribed displacements at the contact candidate nodes 
(unless $\bi{g}\not=\bi{0}$), we adopt the latter definition. 
Let $\bi{f} \in \Re^{n}$ denote the prescribed external nodal load vector. 
For a given truss design $\bi{x} \in \Re_{+}^{m}$, its compliance is 
defined by 
\begin{align}
  \pi(\bi{x}) 
  = \sup_{\bi{u} \in \Re^{n}} 
  \{ 2 \bi{f}^{\top} \bi{u}  - \bi{u}^{\top} K(\bi{x}) \bi{u} 
  \mid
  C_{\rr{n}} \bi{u} \le \bi{g} \} . 
  \label{eq:def.pi.truss}
\end{align}
The truss topology optimization problem is then formulated as follows: 
\begin{subequations}\label{P.truss.design.1}%
  \begin{alignat}{3}
    & \MIN_{\bi{x}}
    &{\quad}& 
    \pi(\bi{x}) \\
    & \ST && 
    \bi{x} \ge \bi{0} , \\
    & && 
    \bi{l}^{\top} \bi{x} \le v . 
  \end{alignat}
\end{subequations}
Here, $v > 0$ is the specified upper bound for the structural volume.

\subsection{Reformulation}

In this section, we show that problem \eqref{P.truss.design.1} can be 
recast as a convex optimization problem. 
A key step for this is formulating the Lagrange dual problem of the 
maximization problem on the right side of \eqref{eq:def.pi.truss}, as 
performed below. 

\begin{theorem}\label{thm:truss.duality}
  For any $\bi{x} \in \Re_{+}^{m}$, 
  $\pi(\bi{x})$ defined by \eqref{eq:def.pi.truss} coincides with 
  the optimal value of the following optimization problem: 
    \begin{alignat*}{3}
      & \MIN
      &{\quad}& 
      \sum_{e=1}^{m} w_{e}
      - 2 \bi{g}^{\top} \bi{r}  \\
      & \ST && 
      w_{e} x_{e} 
      \ge \frac{l_{e}}{E} q_{e}^{2} , 
      \quad e=1,\dots,m, \\
      & && 
      \sum_{e=1}^{m} q_{e} \bi{b}_{e} 
      = \bi{f} + C_{\rr{n}}^{\top} \bi{r} , \\
      & && 
      \bi{r} \le \bi{0} . 
    \end{alignat*}
  Here, $w_{1},\dots,w_{m} \in \Re$, $q_{1},\dots,q_{m} \in \Re$, 
  and $\bi{r} \in \Re^{c}$ are variables to be optimized, 
  and the optimal value is defined to be $+\infty$ if the problem is 
  infeasible.\footnote{%
  Since the maximization problem on the right side of 
  \eqref{eq:def.pi.truss} is feasible, we have 
  $\pi(\bi{x})>-\infty$ $(\forall \bi{x} \in \Re_{+}^{m})$. }
\end{theorem}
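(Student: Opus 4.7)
The plan is to treat $\bi{x}\in\Re_{+}^{m}$ as fixed and apply Lagrange duality to the concave quadratic maximization that appears on the right-hand side of \eqref{eq:def.pi.truss}. I would attach a nonnegative multiplier, parameterized as $-2\bi{r}\ge\bi{0}$ for algebraic convenience, to the constraint $\bi{g}-C_{\rr{n}}\bi{u}\ge\bi{0}$. This produces the Lagrangian
\[
L(\bi{u},\bi{r})=2(\bi{f}+C_{\rr{n}}^{\top}\bi{r})^{\top}\bi{u}-\bi{u}^{\top}K(\bi{x})\bi{u}-2\bi{g}^{\top}\bi{r},\qquad \bi{r}\le\bi{0},
\]
so that weak duality immediately yields $\pi(\bi{x})\le\inf_{\bi{r}\le\bi{0}}\sup_{\bi{u}}L(\bi{u},\bi{r})$.

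Next I would eliminate $\bi{u}$ from the inner supremum. Since $K(\bi{x})\succeq 0$, the value $\sup_{\bi{u}}L$ is finite precisely when $\bi{f}+C_{\rr{n}}^{\top}\bi{r}$ lies in the range of $K(\bi{x})$, in which case it is attained at any $\bi{u}^{*}$ satisfying $K(\bi{x})\bi{u}^{*}=\bi{f}+C_{\rr{n}}^{\top}\bi{r}$ and equals $\bi{u}^{*\top}K(\bi{x})\bi{u}^{*}-2\bi{g}^{\top}\bi{r}$. Introducing the member forces $q_{e}=(Ex_{e}/l_{e})\bi{b}_{e}^{\top}\bi{u}^{*}$ rewrites the condition on $\bi{u}^{*}$ as the equilibrium equation $\sum_{e}q_{e}\bi{b}_{e}=\bi{f}+C_{\rr{n}}^{\top}\bi{r}$, while the value becomes $\sum_{e}(l_{e}/(Ex_{e}))q_{e}^{2}-2\bi{g}^{\top}\bi{r}$. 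A short Lagrange-multiplier calculation further shows that this value coincides with the minimum of $\sum_{e}(l_{e}/(Ex_{e}))q_{e}^{2}$ over all $(q_{e})$ satisfying the equilibrium equation for the given $\bi{r}$. Introducing epigraph slacks $w_{e}$ through the rotated second-order-cone constraint $w_{e}x_{e}\ge(l_{e}/E)q_{e}^{2}$ (which implicitly enforces $w_{e}\ge 0$) then recovers exactly the optimization problem in the theorem statement.

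What remains is to promote weak duality to equality. The primal is always feasible (take $\bi{u}=\bi{0}$, using $\bi{g}\ge\bi{0}$) and is a convex quadratic program with only linear inequality constraints, so the classical strong-duality theorems for convex QPs give $\pi(\bi{x})=\inf D$ with dual attainment whenever $\pi(\bi{x})<+\infty$. The remaining case $\pi(\bi{x})=+\infty$ forces dual infeasibility by weak duality alone: the dual objective $\sum_{e}w_{e}-2\bi{g}^{\top}\bi{r}$ is bounded below by $0$ on its feasible set (since $w_{e}\ge 0$, $\bi{g}\ge\bi{0}$ and $\bi{r}\le\bi{0}$), so a feasible dual would have finite $\inf D$, contradicting $\inf D\ge\pi(\bi{x})=+\infty$. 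The main obstacle I anticipate is careful handling of degenerate members with $x_{e}=0$, which make $K(\bi{x})$ rank-deficient and break the naive conjugate formula used in the second paragraph; the rotated second-order-cone constraint is the unifying device that handles these members correctly, forcing $q_{e}=0$ and admitting $w_{e}=0$ when $x_{e}=0$, in line with the continuous extension of $q^{2}/x$ to the boundary $x=0$.
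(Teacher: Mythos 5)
Your argument is correct, but it reaches the theorem by a genuinely different route. The paper introduces the elongations $c_e$ as independent variables tied to $\bi{u}$ by the affine constraints $c_e=\bi{b}_e^{\top}\bi{u}$ and dualizes compatibility and contact simultaneously: the forces $q_e$ arise as the multipliers of the compatibility constraints, strong duality is immediate because every constraint of the reformulated primal is affine, and $w_ex_e\ge (l_e/E)q_e^2$ falls out of the scalar conjugate $\sup_{c_e}\{2q_ec_e-(E/l_e)x_ec_e^2\}$. You instead dualize only the contact constraint---exactly the Lagrangian $\hat{L}(\bi{u};\bi{r})$ that the paper's Remark~3 writes down and then discards because it leads to the nonconvex problem \eqref{P.not.recommended.1}---and you rescue it by a second stage: eliminating $\bi{u}$ through the range condition on $K(\bi{x})$ and replacing $\bi{u}^{*\top}K(\bi{x})\bi{u}^{*}$ by the minimum complementary energy over statically admissible $\bi{q}$. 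That second stage is precisely the member-wise Legendre transform that the paper performs in one shot, so the two derivations are different decompositions of the same conjugation; yours makes the mechanical content explicit and shows the ``not recommended'' Lagrangian can be salvaged, but it needs polyhedral (Frank--Wolfe type) QP strong duality and careful handling of singular $K(\bi{x})$, which you correctly anticipate. Your observation that the rotated second-order cone supplies the missing $w_e\ge 0$ when $x_e=0$ is apt: as literally written the constraint $w_ex_e\ge(l_e/E)q_e^2$ leaves $w_e$ unbounded below for vanishing members, and the paper's own passage from \eqref{eq:def.Lagrangian.7} to the theorem statement silently relies on the same repair.
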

\begin{proof}
  We first observe that, by substituting \eqref{eq:truss.compatibility} 
  and \eqref{eq:truss.stiffness} into \eqref{eq:def.pi.truss}, 
  $\pi(\bi{x})$ coincides with the optimal value of the following 
  optimization problem: 
  \begin{subequations}\label{P.truss.dual.compliance.2}%
    \begin{alignat}{3}
      & \MAX
      &{\quad}& 
      2 \bi{f}^{\top} \bi{u} 
      - \sum_{e=1}^{m} \frac{E}{l_{e}} x_{e} c_{e}^{2}  \\
      & \ST && 
      c_{e} = \bi{b}_{e}^{\top} \bi{u} , 
      \quad e=1,\dots,m, \\
      & && 
      C_{\rr{n}} \bi{u} \le \bi{g} . 
    \end{alignat}
  \end{subequations}
  Here, $\bi{u} \in \Re^{n}$ and $\bi{c} \in \Re^{m}$ are variables to 
  be optimized. 
  For notational simplicity, let 
  \begin{align}
    U = \Re^{n} \times \Re^{m} , 
    \quad
    V = \Re^{m} \times \Re^{c} 
    \label{eq:def.spaces}
  \end{align}
  Associated with problem \eqref{P.truss.dual.compliance.2}, the Lagrangian, 
  $L : U \times V \to \overRe$, is defined by 
  \begin{align}
    L(\bi{u},\bi{c}; \bi{q},\bi{r}) 
    &= 
    \begin{dcases*}
      2 \bi{f}^{\top} \bi{u} 
      - \sum_{e=1}^{m} \frac{E}{l_{e}} x_{e} c_{e}^{2} \\
      \qquad 
      + 2 \sum_{e=1}^{m} q_{e} (c_{e} - \bi{b}_{e}^{\top} \bi{u})
      - 2 \bi{r}^{\top} (\bi{g} - C_{\rr{n}} \bi{u}) 
      & if $\bi{r} \le \bi{0}$, \\
      +\infty
      & otherwise, 
    \end{dcases*}
    \label{eq:def.Lagrangian.1}
  \end{align}
  where $\bi{q} \in \Re^{m}$ and $\bi{r} \in \Re^{c}$ are 
  the Lagrange multipliers. 
  Since all the constraints of problem \eqref{P.truss.dual.compliance.2} 
  are affine, the Lagrange duality theory ensures the strong duality 
  holds \citep{ET76,Cia89}: 
  \begin{align}
    \sup_{(\bi{u},\bi{c}) \in U} 
    \inf_{(\bi{q},\bi{r}) \in V} 
    L(\bi{u},\bi{c}; \bi{q},\bi{r}) 
    = \inf_{(\bi{q},\bi{r}) \in V} 
    \sup_{(\bi{u},\bi{c}) \in U} 
    L(\bi{u},\bi{c}; \bi{q},\bi{r}) . 
    \label{eq:duality.varPhi.2}
  \end{align}
  Here, the left side of \eqref{eq:duality.varPhi.2} corresponds to 
  problem \eqref{P.truss.dual.compliance.2}, and the right side is its 
  dual problem. 
  In the remainder, we show that this dual problem is equivalent to 
  the minimization problem stated in this theorem. 
  
  By direct calculations, for $\bi{r} \le \bi{0}$ we obtain 
  \begin{align}
    \MoveEqLeft
    \sup_{(\bi{u},\bi{c}) \in U} 
    L(\bi{u},\bi{c}; \bi{q},\bi{r}) 
    \notag\\
    &= \sup_{\bi{u} \in \Re^{n}} \Bigl\{
    2 \bi{u}^{\top} \Bigl(
    \bi{f} - \sum_{e=1}^{m} q_{e} \bi{b}_{e} 
    + C_{\rr{n}}^{\top} \bi{r} \Bigr) 
    \Bigr\} 
    + \sum_{e=1}^{m} \sup_{c_{e} \in \Re} \Bigl\{
    2 q_{e} c_{e} - \frac{E}{l_{e}} x_{e} c_{e}^{2} 
    \Bigr\}
    - 2 \bi{g}^{\top} \bi{r} 
    \notag\\
    & = 
    \begin{dcases*}
      \sum_{e=1}^{m} \sup_{c_{e} \in \Re} \Bigl\{
      2 q_{e} c_{e} - \frac{E}{l_{e}} x_{e} c_{e}^{2} 
      \Bigr\}
      - 2 \bi{g}^{\top} \bi{r} 
      & if $\displaystyle
      \bi{f} - \sum_{e=1}^{m} q_{e} \bi{b}_{e} 
      + C_{\rr{n}}^{\top}  \bi{r} = \bi{0}$, \\
      +\infty
      & otherwise. 
    \end{dcases*}
    \label{eq:def.Lagrangian.6}
  \end{align}
  We can further reduce the first term of the last expression to 
  \begin{align}
    \MoveEqLeft
    \sup_{c_{e} \in \Re} \Bigl\{
    2 q_{e} c_{e} - \frac{E}{l_{e}} x_{e} c_{e}^{2} 
    \Bigr\}    \notag\\
    &= \min_{w_{e} \in \Re} \Bigl\{
    w_{e}
    \Bigm|
    w_{e} \ge \frac{E}{l_{e}} x_{e} c_{e}^{2} , \ 
    q_{e} = \frac{E}{l_{e}} x_{e} c_{e}
    \Bigr\}  \notag\\
    &= \min_{w_{e} \in \Re} \Bigl\{
    w_{e}
    \Bigm|
    w_{e} x_{e} \ge \frac{E}{l_{e}} x_{e}^{2} c_{e}^{2} , \
    q_{e} = \frac{E}{l_{e}} x_{e} c_{e}
    \Bigr\}  \notag\\
    &= \min_{w_{e} \in \Re} \Bigl\{
    w_{e}
    \Bigm|
    w_{e} x_{e} \ge \frac{l_{e}}{E} q_{e}^{2} 
    \Bigr\} , 
    \label{eq:def.Lagrangian.7}
  \end{align}
  where we have used $x_{e} \ge 0$, $E>0$, and $l_{e}>0$. 
  The proof is completed by 
  substituting \eqref{eq:def.Lagrangian.7} into 
  \eqref{eq:def.Lagrangian.6}. 
\end{proof}

By using \refthm{thm:truss.duality}, we can recast problem 
\eqref{P.truss.design.1} equivalently as follows: 
\begin{subequations}\label{P.truss.SOCP.1}%
  \begin{alignat}{3}
    & \MIN_{\bi{x}, \, \bi{w}, \, \bi{q}, \, \bi{r}}
    &{\quad}& 
    \sum_{e=1}^{m} w_{e}
    - 2 \bi{g}^{\top} \bi{r}  \\
    & \ST && 
    w_{e} x_{e} 
    \ge \frac{l_{e}}{E} q_{e}^{2} , 
    \quad e=1,\dots,m, 
    \label{P.truss.SOCP.1.2} \\
    & && 
    \sum_{e=1}^{m} q_{e} \bi{b}_{e} 
    = \bi{f} + C_{\rr{n}}^{\top} \bi{r} , 
    \label{P.truss.SOCP.1.3} \\
    & && 
    \bi{r} \le \bi{0} , 
    \label{P.truss.SOCP.1.4} \\
    & && 
    \bi{x} \ge \bi{0} , 
    \label{P.truss.SOCP.1.5} \\
    & && 
    \bi{l}^{\top} \bi{x} \le v . 
  \end{alignat}
\end{subequations}
This problem is an SOCP problem; to see this explicitly, rewrite 
\eqref{P.truss.SOCP.1.2} and \eqref{P.truss.SOCP.1.5} as 
\begin{align*}
  \begin{bmatrix}
    \sqrt{l_{e}/E} q_{e} \\
    w_{e} \\
    x_{e} \\
  \end{bmatrix}
  \in \KC^{3}  , 
  \quad e=1,\dots,m.  
\end{align*}
We can solve SOCP problems efficiently with a primal-dual interior-point 
method \cite{AL12}. 


\begin{remark}
  Constraint \eqref{P.truss.SOCP.1.3} can be interpreted as the 
  force-balance equation, where 
  $q_{e}$ $(e=1,\dots,m)$ and $r_{j}$ $(j=1,\dots,c)$ 
  correspond to the axial force of truss member $e$ and 
  the normal contact reaction at contact candidate node $j$, 
  respectively. 
  Constraint \eqref{P.truss.SOCP.1.4} corresponds to the non-adhesion 
  condition of the unilateral contact. 
  \finbox
\end{remark}

\begin{remark}
  Extension of problem \eqref{P.truss.SOCP.1} to a multiple load case 
  is obvious. 
  \finbox
\end{remark}

\begin{remark}
  The form in \eqref{P.truss.dual.compliance.2} is crucial for deriving 
  our convex reformulation in \eqref{P.truss.SOCP.1}. 
  For example, associate with the form on the right side of 
  \eqref{eq:def.pi.truss}, we can define the Lagrangian as 
  \begin{align*}
    \hat{L}(\bi{u}; \bi{r}) = 
    \begin{dcases*}
      2 \bi{f}^{\top} \bi{u} 
      - \bi{u}^{\top} K(\bi{x}) \bi{u} 
      - 2 \bi{r}^{\top} (\bi{g} - C_{\rr{n}} \bi{u}) 
      & if $\bi{r} \le \bi{0}$, \\
      +\infty 
      & otherwise, 
    \end{dcases*}
  \end{align*}
  where $\bi{r} \in \Re^{c}$ is the Lagrange multiplier. 
  Since we have 
  \begin{align*}
    \sup_{\bi{u} \in \Re^{n}} \hat{L}(\bi{u}; \bi{r}) 
    &= \sup_{\bi{u} \in \Re^{n}} \{ 
    2 \bi{u}^{\top} (\bi{f} + C_{\rr{n}}^{\top} \bi{r})
    - \bi{u}^{\top} K(\bi{x}) \bi{u} 
    \} 
    - 2 \bi{g}^{\top} \bi{r}  \notag\\
    &= \{ 
    \bi{u}^{\top} K(\bi{x}) \bi{u} 
    \mid
    (\bi{f} + C_{\rr{n}}^{\top} \bi{r} )
    - K(\bi{x}) \bi{u} = \bi{0} 
    \} 
    - 2 \bi{g}^{\top} \bi{r} 
  \end{align*}
  for any $\bi{r} \le \bi{0}$, the Lagrange dual problem is formulated 
  as follows: 
    \begin{alignat*}{3}
      & \MIN_{\bi{u}, \, \bi{r}}
      &{\quad}& 
      \bi{u}^{\top} K(\bi{x}) \bi{u} - 2 \bi{g}^{\top} \bi{r}  \\
      & \ST && 
      K(\bi{x}) \bi{u} = \bi{f} + C_{\rr{n}}^{\top} \bi{r} , \\
      & && 
      \bi{r} \le \bi{0} . 
    \end{alignat*}
  Therefore, topology optimization problem \eqref{P.truss.design.1} is 
  equivalent also to the following problem in variables 
  $\bi{x} \in \Re^{m}$, $\bi{u} \in \Re^{n}$, and $\bi{r} \in \Re^{c}$: 
  \begin{subequations}\label{P.not.recommended.1}%
    \begin{alignat}{3}
      & \MIN_{\bi{x}, \, \bi{u}, \, \bi{r}}
      &{\quad}& 
      \bi{u}^{\top} K(\bi{x}) \bi{u} - 2 \bi{g}^{\top} \bi{r}  \\
      & \ST && 
      K(\bi{x}) \bi{u} = \bi{f} + C_{\rr{n}}^{\top} \bi{r} , 
      \label{P.not.recommended.1.3} \\
      & && 
      \bi{r} \le \bi{0} , \\
      & && 
      \bi{x} \ge \bi{0} , \\
      & && 
      \bi{l}^{\top} \bi{x} \le v . 
    \end{alignat}
  \end{subequations}
  This problem, however, has a nonconvex objective function and 
  nonconvex equality constraints. 
  Therefore, problem \eqref{P.truss.SOCP.1} is much preferable to 
  problem \eqref{P.not.recommended.1}. 
  Alternatively, if $K(\bi{\rho})$ is regular, then we can eliminate 
  variable $\bi{u}$ from problem \eqref{P.not.recommended.1} by using 
  \eqref{P.not.recommended.1.3}.  
  This yields the following form: 
    \begin{alignat*}{3}
      & \MIN_{\bi{x}, \, \bi{\zeta}, \, \bi{r}}
      &{\quad}& 
      \bi{\zeta}^{\top} K(\bi{x})^{-1} \bi{\zeta} - 2 \bi{g}^{\top} \bi{r} \\
      & \ST && 
      \bi{\zeta} = \bi{f} + C_{\rr{n}}^{\top} \bi{r} , \\
      & && 
      \bi{r} \le \bi{0} , \\
      & && 
       \bi{x} \ge \epsilon \bi{1} , \\
      & && 
      \bi{l}^{\top} \bi{x} \le v . 
    \end{alignat*}
  Here, $\epsilon > 0$ is a small constant to avoid singularity 
  of $K(\bi{x})$. 
  Since the objective function of this problem is nonconvex, 
  problem \eqref{P.truss.SOCP.1} is again much preferable. 
  \finbox
\end{remark}

\subsection{Relation with existing formulations}
\label{sec:truss.relation}

To obtain an optimal solution of problem \eqref{P.truss.design.1}, 
\citet{KZZ98} proposed to solve a convex optimization problem that is 
different from problem \eqref{P.truss.SOCP.1}.\footnote{%
A very similar approach to {\em free-material\/} continuum-based 
topology optimization can be found in \citet{BKNZ00}. } 
Specifically, their approach solves an optimization problem with a 
linear objective function and some convex quadratic constraints. 
This problem can further be reduced to a {\em quadratic programming\/} 
(QP) problem, where a convex quadratic function is minimized under some 
linear inequality constraints. 
A similar formulation was presented also by \citet{KPR95}.\footnote{%
The problem considered by \citet{KPR95} is slightly different from 
problem \eqref{P.truss.design.1}. 
There, the initial gaps are also considered design variables, and the 
normal contact reactions are required to be uniformly distributed. } 
In this section, we establish the relation between this approach and 
our approach, i.e., problem \eqref{P.truss.SOCP.1}. 
The analysis using the perturbation function for the problem under 
consideration attempts to provide deep understanding of the attribute of 
the problem.

For notational simplicity, define $X$ by 
\begin{align*}
  X = \{ \bi{x} \in \Re_{+}^{m} 
  \mid
  \bi{l}^{\top} \bi{x} \le v 
  \} . 
\end{align*}
We also use $U$ and $V$ defined by \eqref{eq:def.spaces}. 
For every $\bi{x} \in X$, define 
$\varPhi_{\bi{x}} : U \times V \to \overRe$ by 
\begin{align}
  \varPhi_{\bi{x}}(\bi{u},\bi{c}; \bi{\eta},\bi{\lambda}) = 
  \begin{dcases*}
    \sum_{e=1}^{m} \frac{E}{l_{e}} x_{e} c_{e}^{2} 
    - 2 \bi{f}^{\top} \bi{u} 
    & if $2(c_{e} - \bi{b}_{e}^{\top} \bi{u}) = \eta_{e}$ 
    $(e=1,\dots,m)$, \\
    & \quad $2(\bi{g} - C_{\rr{n}} \bi{u}) \ge -\bi{\lambda}$, \\
    +\infty 
    & otherwise. 
  \end{dcases*}
  \label{eq:def.varPhi.function}
\end{align}
It is easy to verify that $\varPhi_{\bi{x}}$ is a closed proper convex 
function for any $\bi{x} \in X$. 
By definition in \eqref{eq:def.pi.truss}, the compliance can be written 
as 
\begin{align}
  \pi(\bi{x}) 
  = \sup_{(\bi{u},\bi{c}) \in U} 
  -\varPhi_{\bi{x}}(\bi{u},\bi{c}; \bi{0},\bi{0})  . 
\end{align}
Thus, $\varPhi$ is considered a perturbation function 
associated with the optimization problem on the right side of 
\eqref{eq:def.pi.truss}. 
The topology optimization problem \eqref{P.truss.design.1} can be 
identified with finding $\bi{x}$ at which the value 
\begin{align}
  \inf_{\bi{x} \in X} 
  \sup_{(\bi{u},\bi{c}) \in U} 
  -\varPhi_{\bi{x}}(\bi{u},\bi{c}; \bi{0},\bi{0})  
  \label{eq:inf-sup.form.1}
\end{align}
is attained.\footnote{%
Essentially, formulation \eqref{eq:inf-sup.form.1} is same as the 
saddle-point formulation studied in \citet{Pet96}. }

Since $-\varPhi_{\bi{x}}(\bi{u},\bi{c}; \bi{0},\bi{0})$ is closed, 
convex with respect to $\bi{x}$, and concave with respect 
to $(\bi{u},c)$, 
the standard minimax theorem asserts \citep{Roc70}
\begin{align}
  \inf_{\bi{x} \in X} 
  \sup_{(\bi{u},\bi{c}) \in U} 
  -\varPhi_{\bi{x}}(\bi{u},\bi{c}; \bi{0},\bi{0}) 
  = \sup_{(\bi{u},\bi{c}) \in U} 
  \inf_{\bi{x} \in X} 
  -\varPhi_{\bi{x}}(\bi{u},\bi{c}; \bi{0},\bi{0}) . 
  \label{eq:inf-sup.form.2}
\end{align}
On the other hand, the duality theory asserts \citep{ET76} 
\begin{align}
  \sup_{(\bi{u},\bi{c}) \in U} 
  -\varPhi_{\bi{x}}(\bi{u},\bi{c}; \bi{0},\bi{0}) 
  &= \inf_{(\bi{\eta}^{*},\bi{\lambda}^{*}) \in V} 
  \varPhi_{\bi{x}}^{*}(\bi{0},\bi{0}; \bi{\eta}^{*},\bi{\lambda}^{*}) , 
  \label{eq:duality.varPhi.1}
\end{align}
from which problem \eqref{eq:inf-sup.form.1} can be reduced to 
\begin{align}
  \inf_{\bi{x} \in X} 
  \sup_{(\bi{u},\bi{c}) \in U} 
  -\varPhi_{\bi{x}}(\bi{u},\bi{c}; \bi{0},\bi{0}) 
  = \inf_{(\bi{x},\bi{\eta}^{*},\bi{\lambda}^{*}) \in X \times V} 
  \varPhi_{\bi{x}}^{*}(\bi{0},\bi{0}; \bi{\eta}^{*},\bi{\lambda}^{*}) . 
  \label{eq:duality.varPhi.0}
\end{align}
In a nutshell, the QP approach uses \eqref{eq:inf-sup.form.2}, as well as 
the facts that $\varPhi_{\bi{x}}(\bi{u},\bi{c}; \bi{0},\bi{0})$ is 
linear with respect to $\bi{x}$ and vertices of polyhedron $X$ can be 
obtained explicitly. 
In contrast, our formulation in \eqref{P.truss.SOCP.1} is essentially 
based on \eqref{eq:duality.varPhi.0}. 
In the remainder, we give brief exposition of the two approaches. 

We begin with the QP approach. 
For notational simplicity, define $U_{\rr{F}}$ by 
\begin{align*}
  U_{\rr{F}} 
  = \{ (\bi{u},\bi{c}) \in U 
  \mid
  c_{e} = \bi{b}_{e}^{\top} \bi{u} 
  \ (e=1,\dots,m), \
  C_{\rr{n}} \bi{u} \le \bi{g} 
  \} . 
\end{align*}
By direct calculations, the right side of \eqref{eq:inf-sup.form.2} is 
reduced to 
\begin{align}
  \MoveEqLeft
  \sup_{(\bi{u},\bi{c}) \in V} 
  \inf_{\bi{x} \in X} 
  -\varPhi_{\bi{x}}(\bi{u},\bi{c}; \bi{0},\bi{0}) 
  \notag\\
  &= 
  \sup_{(\bi{u},\bi{c}) \in U_{\rr{F}}} 
  \inf_{\bi{x} \in X} \Bigl\{ 
  2 \bi{f}^{\top} \bi{u} 
  - \sum_{e=1}^{m} \frac{E}{l_{e}} x_{e} c_{e}^{2}
  \Bigr\}
  \notag\\
  &= 
  \sup_{(\bi{u},\bi{c}) \in U_{\rr{F}}} \left\{
  2 \bi{f}^{\top} \bi{u} 
  - \sup_{\bi{x} \in X } 
  \Bigl\{ \sum_{e=1}^{m} \frac{E}{l_{e}} x_{e} c_{e}^{2}
  \Bigr\}
  \right\} . 
  \label{eq:QP.1}
\end{align}
From the standard theory of linear programming, we obtain 
\begin{align}
  \sup_{\bi{x} \in X } 
  \Bigl\{ \sum_{e=1}^{m} \frac{E}{l_{e}} x_{e} c_{e}^{2}
  \Bigr\}
  = \max
  \Bigl\{ 
  \frac{E v}{l_{1}^{2}} c_{1}^{2}, \dots, \frac{E v}{l_{m}^{2}} c_{m}^{2}
  \Bigr\}  , 
  \label{eq:QP.2}
\end{align}
where the vertices of $X$ have been considered to evaluate the optimal 
value of a linear programming problem. 
Substitution of \eqref{eq:QP.2} into \eqref{eq:QP.1} yields the 
following optimization problem: 
\begin{subequations}\label{P.truss.LP.1}%
  \begin{alignat}{3}
    & \MAX_{\bi{u}, \, \bi{c}, \, \alpha}
    &{\quad}& 
    2 \bi{f}^{\top} \bi{u} - \alpha \\
    & \ST && 
    \alpha \ge \frac{E v}{l_{e}^{2}} c_{e}^{2} , 
    \quad e=1,\dots,m, \\
    & && 
    c_{e} = \bi{b}_{e}^{\top} \bi{u} , 
    \quad e=1,\dots,m, \\
    & && 
    C_{\rr{n}} \bi{u} \le \bi{g} . 
  \end{alignat}
\end{subequations}
This is minimization of a linear objective function under the convex 
quadratic constraints, and can be recast as an SOCP problem. 
Problem \eqref{P.truss.LP.1} can further be reduced to the following form: 
\begin{subequations}\label{P.truss.LP.4}%
  \begin{alignat}{3}
    & \MAX_{\bi{u}, \, \bi{c}, \, \beta}
    &{\quad}& 
    2 \bi{f}^{\top} \bi{u} - \beta^{2} \\
    & \ST && 
    \beta \ge \frac{\sqrt{E v}}{l_{e}} c_{e} \ge -\beta , 
    \quad e=1,\dots,m, \\
    & && 
    c_{e} = \bi{b}_{e}^{\top} \bi{u} , 
    \quad e=1,\dots,m, 
    \label{P.truss.LP.4.3} \\
    & && 
    C_{\rr{n}} \bi{u} \le \bi{g} . 
  \end{alignat}
\end{subequations}
This is a QP problem (i.e., by converting 
maximization to minimization, we obtain a problem that minimizes a 
convex quadratic function under some linear constraints).\footnote{%
Clearly, we can eliminate variable $\bi{c}$ from problem 
\eqref{P.truss.LP.4} by using \eqref{P.truss.LP.4.3}. 
However, just for consistency with other formulations in the paper, we do 
not carry out this elimination. 
Similarly, variable $\bi{c}$ in problem \eqref{P.truss.LP.1} can also be 
eliminated, as is done in \cite{KZZ98}. } 

\begin{remark}
  Based on \eqref{eq:inf-sup.form.2}, \citet{PP97} proposed an 
  alternative approach. 
  Define $\psi : \Re^{n} \to \overRe$ by
  \begin{align*}
    \psi(\bi{u}) 
    = \inf_{\bi{x} \in X} 
    -\varPhi(\bi{x}; \bi{u},\bi{c}(\bi{u}); \bi{0},\bi{0}) , 
  \end{align*}
  where 
  \begin{align}
    c_{e}(\bi{u}) = \bi{b}_{e}^{\top} \bi{u} , 
    \quad
    e=1,\dots,m . 
    \label{eq:c.as.a.function}
  \end{align}
  Then the right side of \eqref{eq:inf-sup.form.2} can be written as  
  \begin{align*}
    \sup_{\bi{u} \in \Re^{n}} \psi(\bi{u}) . 
  \end{align*}
  \citet{PP97} applied a subgradient method to this optimization problem. 
  \finbox
\end{remark}

We next see that the approach presented in this paper is essentially 
based on \eqref{eq:duality.varPhi.1}. 
For this purpose, we show that \eqref{eq:duality.varPhi.2}, used in the proof 
of \refthm{thm:truss.duality}, implies \eqref{eq:duality.varPhi.1}. 
From definition \eqref{eq:def.varPhi.function} of $\varPhi_{\bi{x}}$ 
and definition \eqref{eq:def.Lagrangian.1} of $L$, we have 
\begin{align*}
  \MoveEqLeft
  \sup_{(\bi{\eta},\bi{\lambda}) \in V} 
  \left\{ 
  \begin{bmatrix}
    \bi{\eta}^{*} \\    \bi{\lambda}^{*} \\
  \end{bmatrix}
  ^{\top}
  \begin{bmatrix}
    \bi{\eta} \\    \bi{\lambda} \\
  \end{bmatrix}
  - \varPhi_{\bi{x}}(\bi{u},\bi{c}; \bi{\eta},\bi{\lambda})  
  \right\}    \notag\\
  &= 2 \bi{f}^{\top} \bi{u} 
  - \sum_{e=1}^{m} \frac{E}{l_{e}} x_{e} c_{e}^{2} 
  + \sup 
  \left\{ \left.
  \begin{bmatrix}
    \bi{\eta}^{*} \\    \bi{\lambda}^{*} \\
  \end{bmatrix}
  ^{\top}
  \begin{bmatrix}
    \bi{\eta} \\    \bi{\lambda} \\
  \end{bmatrix}
  \ \right|\ 
  (\bi{\eta}, \, \bi{\lambda}) \in 
  \domain \varPhi_{\bi{x}}(\bi{u},\bi{c}; \,\cdot\, , \,\cdot\, )
  \right\}
  \notag\\
  &= 2 \bi{f}^{\top} \bi{u} 
  - \sum_{e=1}^{m} \frac{E}{l_{e}} x_{e} c_{e}^{2} 
  + \sum_{e=1}^{m} \sup_{\eta_{e} \in \Re} \{ 
  \eta_{e}^{*} \eta_{e} 
  \mid
  2(c_{e} - \bi{b}_{e}^{\top} \bi{u}) = \eta_{e} 
  \}    \notag\\
  & \qquad
  {}+ \sup_{\bi{\lambda} \in \Re^{c}} \{
  \bi{\lambda}^{*\top} (2\bi{g} - 2C_{\rr{n}} \bi{u} + \bi{\lambda})
  \mid
  2\bi{g} - 2C_{\rr{n}} \bi{u} + \bi{\lambda} \ge \bi{0} 
  \}
  - \bi{\lambda}^{*\top} (2\bi{g} - 2C_{\rr{n}} \bi{u}) 
  \notag\\
  &= L(\bi{u},\bi{c}; \bi{\eta}^{*},\bi{\lambda}^{*})  . 
\end{align*}
This relation is the key to showing 
\begin{align}
  \varPhi_{\bi{x}}(\bi{u},\bi{c}; \bi{0},\bi{0})  
  &= -\inf_{(\bi{\eta}^{*},\bi{\lambda}^{*}) \in V} 
  L(\bi{u},\bi{c}; \bi{\eta}^{*},\bi{\lambda}^{*}) , 
  \label{eq:duality.varPhi.4} \\
  \varPhi_{\bi{x}}^{*}(\bi{0},\bi{0}; \bi{\eta}^{*},\bi{\lambda}^{*}) 
  &= \sup_{(\bi{u},\bi{c}) \in U} 
  L(\bi{u},\bi{c}; \bi{\eta}^{*},\bi{\lambda}^{*}) , 
  \label{eq:duality.varPhi.5}
\end{align}
which imply \eqref{eq:duality.varPhi.1}. 
Namely, for any 
$\bi{x} \in X$ and any $(\bi{u},\bi{c})\in U$, 
since $\varPhi_{\bi{x}}(\bi{u},\bi{c}; \,\cdot\, , \,\cdot\, )$ 
is a closed proper convex function, 
its biconjugate function coincides with itself \cite{ET76}, i.e., 
\begin{align*}
  \varPhi_{\bi{x}}(\bi{u},\bi{c}; \bi{\eta},\bi{\lambda}) 
  &= \sup_{(\bi{\eta}^{*},\bi{\lambda}^{*}) \in V}
  \left\{
  \begin{bmatrix}
    \bi{\eta}^{*} \\   \bi{\lambda}^{*} \\
  \end{bmatrix}
  ^{\top}
  \begin{bmatrix}
    \bi{\eta} \\   \bi{\lambda} \\
  \end{bmatrix}
  - \sup_{(\bi{\eta},\bi{\lambda}) \in V} 
  \left\{
  \begin{bmatrix}
    \bi{\eta}^{*} \\   \bi{\lambda}^{*} \\
  \end{bmatrix}
  ^{\top}
  \begin{bmatrix}
    \bi{\eta} \\   \bi{\lambda} \\
  \end{bmatrix}
  - \varPhi_{\bi{x}}(\bi{u},\bi{c};  \bi{\eta},\bi{\lambda}) 
  \right\}
  \right\}
  \notag\\
  &= \sup_{(\bi{\eta}^{*},\bi{\lambda}^{*}) \in V}
  \left\{
  \begin{bmatrix}
    \bi{\eta}^{*} \\   \bi{\lambda}^{*} \\
  \end{bmatrix}
  ^{\top}
  \begin{bmatrix}
    \bi{\eta} \\   \bi{\lambda} \\
  \end{bmatrix}
  - L(\bi{u},\bi{c}; \bi{\eta}^{*},\bi{\lambda}^{*}) 
  \right\} , 
\end{align*}
which establishes \eqref{eq:duality.varPhi.4}. 
Moreover, application of the Legendre--Fenchel transform to 
$\varPhi_{\bi{x}}$ yields 
\begin{align*}
  \varPhi_{\bi{x}}^{*}(\bi{u}^{*},\bi{c}^{*}; \bi{\eta}^{*},\bi{\lambda}^{*}) 
  &= \sup_{(\bi{u},\bi{c}) \in U} 
  \left\{
  \begin{bmatrix}
    \bi{u}^{*} \\   \bi{c}^{*} \\
  \end{bmatrix}
  ^{\top}
  \begin{bmatrix}
    \bi{u} \\   \bi{c} \\
  \end{bmatrix}
  + \sup_{(\bi{\eta},\bi{\lambda}) \in V} 
  \left\{
  \begin{bmatrix}
    \bi{\eta}^{*} \\   \bi{\lambda}^{*} \\
  \end{bmatrix}
  ^{\top}
  \begin{bmatrix}
    \bi{\eta} \\   \bi{\lambda} \\
  \end{bmatrix}
  - \varPhi_{\bi{x}}(\bi{u},\bi{c}; \bi{\eta},\bi{\lambda}) 
  \right\}
  \right\}
  \notag\\
  &= \sup_{(\bi{u},\bi{c}) \in U} 
  \left\{
  \begin{bmatrix}
    \bi{u}^{*} \\   \bi{c}^{*} \\
  \end{bmatrix}
  ^{\top}
  \begin{bmatrix}
    \bi{u} \\   \bi{c} \\
  \end{bmatrix}
  + L(\bi{u},\bi{c}; \bi{\eta}^{*},\bi{\lambda}^{*}) 
  \right\} , 
\end{align*}
which establishes \eqref{eq:duality.varPhi.5}. 

Thus, the difference between the QP approach in literature and the 
approach presented in this paper can be clearly captured in 
\eqref{eq:inf-sup.form.2} and \eqref{eq:duality.varPhi.0}. 
One of advantages of the proposed formulation is that it retains the 
design variable $\bi{x}$ and hence we can incorporate various 
practicality constraints on truss design.\footnote{%
Concrete examples of such constraints appear in section~\ref{sec:ex.1}. } 
In contrast, it is very difficult for the QP approach to handle 
constraints concerning truss design, because in this approach the design 
variable $\bi{x}$ is eliminated; see \eqref{eq:QP.2}.

\begin{remark}\label{rem:nonconvex.case}
  It is worth of noting that \eqref{eq:duality.varPhi.0} holds even if 
  $\varPhi_{\bi{x}}(\bi{u},\bi{c}; \bi{\eta},\bi{\lambda})$ is not 
  convex with respect to $\bi{x}$. 
  This allows extending the presented approach to continua, 
  as seen in section~\ref{sec:body}. 
  \finbox
\end{remark}

\begin{remark}
  For problem \eqref{eq:inf-sup.form.1}, approaches based on MPCC for 
  problem can also be found in literature \citep{Tin99,Hil00,SK10,HKP99}. 
  In these approaches, the KKT condition for the inner maximization 
  problem of \eqref{eq:inf-sup.form.1} is treated as constraint for the 
  outer minimization problem. 
  \finbox
\end{remark}

\section{Continuum topology optimization}
\label{sec:body}

This section extends the formulation in section~\ref{sec:truss} to the 
continuum-based topology optimization problem. 
We adopt the SIMP approach \citep{BS99} 
with the density filter \citep{BT01,Bou01}. 

For simplicity, suppose that a planar design domain is discretized as 
regular mesh with square four-node quadrilateral (Q4) finite elements. 
Extensions to a three-dimensional case and other finite elements are 
straightforward. 

Any notation unexplained in this section is the same as the one in 
section~\ref{sec:truss}. 

Let $\bi{u} \in \Re^{\bar{n}}$ and $\bi{f} \in \Re^{\bar{n}}$ denote the 
nodal displacement vector and the external nodal load vector, 
respectively, where $\bar{n}$ is twice the number of nodes. 
The displacement boundary conditions (i.e., the Dirichlet boundary 
conditions) are written in the form 
\begin{align*}
  D \bi{u} = \bi{0} , 
\end{align*}
where $D \in \Re^{d \times \bar{n}}$ is a constant matrix. 
Note that the number of degrees of freedom of the nodal displacements 
is $\bar{n}-d$. 
The non-penetration conditions of the contact candidate nodes are given 
as \eqref{eq:truss.non-penetration} with 
$C_{\rr{n}} \in \Re^{c \times \bar{n}}$. 

Let $m$ denote the number of finite elements. 
We use $\rho_{e} \in [0,1]$ $(e=1,\dots,m)$ to denote the density of 
element $e$, where $\rho_{e}=1$ means that element $e$ exists and 
$\rho_{e}=0$ means that it is absent. 
The stiffness matrix, denoted $K(\bi{\rho}) \in \SC^{\bar{n}}$, is given 
in the form 
\begin{align}
  K(\bi{\rho}) = \sum_{e=1}^{m} \rho_{e}^{p} K_{e} , 
  \label{eq:continuum.stiffness.1}
\end{align}
where $p>1$ is the penalization power of SIMP, 
and $K_{e} \in \SC^{\bar{n}}$ is a constant positive semidefinite matrix. 
Diagonalization of $K_{e}$ results in the form 
\begin{align}
  K_{e} = B_{e} \kappa B_{e}^{\top} , 
  \label{eq:continuum.stiffness.2}
\end{align}
where $\kappa \in \SC^{5}$ is a positive definite constant matrix, 
and $B_{e} \in \Re^{\bar{n} \times 5}$ is a constant matrix. 
Note that $\kappa$ is identical for all finite elements because we 
suppose a regular mesh. 
The compliance is defined by 
\begin{align}
  \pi(\bi{\rho}) 
  = \sup_{\bi{u} \in \Re^{\bar{n}}} \{
  2 \bi{f}^{\top} \bi{u}  - \bi{u}^{\top} K(\bi{\rho}) \bi{u} 
  \mid
  D \bi{u} = \bi{0} , \
  C_{\rr{n}} \bi{u} \le \bi{g} 
  \} . 
  \label{eq:compliance.body}
\end{align}
The following theorem provides another expression of $\pi(\bi{\rho})$, 
that is useful in topology optimization. 

\begin{theorem}\label{thm:continuum.duality}%
  For any $\bi{\rho} \in \Re_{+}^{m}$, 
  $\pi(\bi{\rho})$ in \eqref{eq:compliance.body} coincides with the 
  optimal value of the following optimization problem: 
  \begin{subequations}\label{P.continuum.dual.compliance.1}%
    \begin{alignat}{3}
      & \MIN_{\bi{w}, \, \bi{s}, \, \bi{t}, \, \bi{r}}
      &{\quad}& 
      \sum_{e=1}^{m} w_{e} - 2 \bi{g}^{\top} \bi{r}  \\
      & \ST && 
      w_{e} \rho_{e}^{p} 
      \ge \bi{s}_{e}^{\top} \kappa^{-1} \bi{s}_{e} , 
      \quad e=1,\dots,m, \\
      & && 
      \sum_{e=1}^{m} B_{e} \bi{s}_{e} 
      = \bi{f} + D^{\top} \bi{t} + C_{\rr{n}}^{\top} \bi{r} , 
      \label{P.continuum.dual.compliance.1.3} \\
      & && 
      \bi{r} \le \bi{0} . 
      \label{P.continuum.dual.compliance.1.4}
    \end{alignat}
  \end{subequations}
  Here, if the problem is infeasible, then the optimal value is defined 
  to be $+\infty$. 
\end{theorem}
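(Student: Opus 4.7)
The plan is to mimic the proof of \refthm{thm:truss.duality}, with three adjustments dictated by the continuum setting: (i) the SIMP stiffness $\rho_e^p$ plays the role of $x_e$, (ii) the matrix $\kappa$ plays the role of the scalar $E/l_e$, and (iii) the Dirichlet condition $D\bi{u}=\bi{0}$ must be dualized alongside the non-penetration condition. Note that Remark~\ref{rem:nonconvex.case} already hints that convexity in $\bi{\rho}$ is not required: $\bi{\rho}$ is frozen throughout, so the derivation is a straightforward Lagrange duality calculation for a concave quadratic program in $\bi{u}$.

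First, I would substitute the factorization \eqref{eq:continuum.stiffness.1}--\eqref{eq:continuum.stiffness.2} into \eqref{eq:compliance.body} and introduce the auxiliary variables $\bi{c}_e = B_e^\top \bi{u}\in\Re^5$, thereby rewriting $\pi(\bi{\rho})$ as the optimal value of
\begin{alignat*}{3}
  & \MAX_{\bi{u},\,\bi{c}_1,\dots,\bi{c}_m}
  &{\quad}&
  2 \bi{f}^{\top} \bi{u}
  - \sum_{e=1}^{m} \rho_{e}^{p}\, \bi{c}_e^\top \kappa\, \bi{c}_e \\
  & \ST &&
  \bi{c}_e = B_e^\top \bi{u},\quad e=1,\dots,m,\\
  & && D \bi{u} = \bi{0},\\
  & && C_{\rr{n}} \bi{u} \le \bi{g}.
\end{alignat*}
All constraints are affine and the objective is concave in $(\bi{u},\bi{c}_1,\dots,\bi{c}_m)$, so the strong duality theorem applies. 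I would then form the Lagrangian by attaching multipliers $\bi{s}_e\in\Re^5$ to the compatibility equations, $\bi{t}\in\Re^d$ to the Dirichlet equation, and $\bi{r}\in\Re^c$ (with the indicator $\bi{r}\le\bi{0}$) to the non-penetration inequality.

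Next I would take the supremum over $(\bi{u},\bi{c}_1,\dots,\bi{c}_m)$. The $\bi{u}$-supremum is finite only when the coefficient of $\bi{u}$ vanishes, yielding precisely the force-balance equation \eqref{P.continuum.dual.compliance.1.3}. For each $\bi{c}_e$ the inner problem is an unconstrained concave quadratic
\begin{equation*}
  \sup_{\bi{c}_e\in\Re^5}\Bigl\{ 2\bi{s}_e^\top \bi{c}_e - \rho_e^p\, \bi{c}_e^\top \kappa\, \bi{c}_e\Bigr\},
\end{equation*}
which by the same trick as in \eqref{eq:def.Lagrangian.7} can be re-expressed as
\begin{equation*}
  \min_{w_e\in\Re}\Bigl\{ w_e \;\Bigm|\; w_e \ge \rho_e^p\, \bi{c}_e^\top \kappa\, \bi{c}_e,\ \kappa \rho_e^p \bi{c}_e = \bi{s}_e \Bigr\}
  = \min_{w_e\in\Re}\Bigl\{ w_e \;\Bigm|\; w_e \rho_e^p \ge \bi{s}_e^\top \kappa^{-1} \bi{s}_e\Bigr\},
\end{equation*}
where the second equality follows from multiplying the first constraint by $\rho_e^p\ge 0$ and substituting $\rho_e^p \kappa \bi{c}_e = \bi{s}_e$. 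Adding the residual term $-2\bi{g}^\top\bi{r}$ and combining these pieces reproduces \eqref{P.continuum.dual.compliance.1} exactly.

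The main subtlety, and where I would be most careful, is the degenerate case $\rho_e = 0$: since $\kappa$ is positive definite, the transformed constraint $w_e \rho_e^p \ge \bi{s}_e^\top \kappa^{-1} \bi{s}_e$ automatically forces $\bi{s}_e = \bi{0}$ there, which is consistent with the original equivalence $\rho_e^p \kappa \bi{c}_e = \bi{s}_e$; one should check that this agrees with the supremum being $+\infty$ whenever $\bi{s}_e \neq \bi{0}$ in the Lagrangian, which it does because the unconstrained $\bi{c}_e$-quadratic becomes linear and unbounded. A secondary technical point is handling the infeasible case of the dual: if no $(\bi{w},\bi{s},\bi{t},\bi{r})$ satisfies \eqref{P.continuum.dual.compliance.1.3}--\eqref{P.continuum.dual.compliance.1.4}, the infimum is $+\infty$ by convention, matching the case when the primal sup in \eqref{eq:compliance.body} is unbounded above. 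Everything else is a routine repetition of the steps in the proof of \refthm{thm:truss.duality}.
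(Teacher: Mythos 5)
Your proposal is correct and follows essentially the same route as the paper: the paper's own proof simply substitutes \eqref{eq:continuum.stiffness.1}--\eqref{eq:continuum.stiffness.2} into \eqref{eq:compliance.body} to obtain the concave quadratic program in $(\bi{u},\bi{c}_{1},\dots,\bi{c}_{m})$ and then states that the result follows ``analogous to Theorem~\ref{thm:truss.duality}'' as its Lagrange dual. You have merely written out the details of that analogy --- the multipliers $\bi{s}_{e}$, $\bi{t}$, $\bi{r}$, the force-balance equation from stationarity in $\bi{u}$, and the rotated-cone rewriting of the $\bi{c}_{e}$-suprema --- exactly as intended.
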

\begin{proof}
  By substituting \eqref{eq:continuum.stiffness.1} and 
  \eqref{eq:continuum.stiffness.2} into \eqref{eq:compliance.body}, 
  we see that $\pi(\bi{\rho})$ is the optimal value of the following 
  optimization problem: 
  \begin{subequations}\label{P.def.complementary.1}%
    \begin{alignat}{3}
      & \MAX_{\bi{u}, \, \bi{c}}
      &{\quad}& 
      2 \bi{f}^{\top} \bi{u} 
      - \sum_{e=1}^{m} \rho_{e}^{p} (\bi{c}_{e}^{\top} \kappa \bi{c}_{e}) \\
      & \ST && 
      \bi{c}_{e} = B_{e}^{\top} \bi{u} , 
      \quad e=1,\dots,m, 
      \label{P.def.complementary.1.2} \\
      & && 
      \bi{D} \bi{u} = \bi{0} , \\
      & && 
      C_{\rr{n}} \bi{u} \le \bi{g} . 
    \end{alignat}
  \end{subequations}
  Here, optimization variables are 
  $\bi{c}_{1},\dots,\bi{c}_{m} \in \Re^{5}$ and 
  $\bi{u} \in \Re^{\bar{n}}$. 
  Analogous to \refthm{thm:truss.duality}, the assertion of this theorem 
  can be obtained as the Lagrange dual problem of 
  \eqref{P.def.complementary.1}. 
\end{proof}

\begin{remark}
  Constraint \eqref{P.continuum.dual.compliance.1.3} is the 
  force-balance equation, where $\bi{s}_{e} \in \Re^{5}$ 
  is the generalized stress, 
  $\bi{t} \in \Re^{d}$ is the reaction vector stemming from the 
  the Dirichlet boundary conditions, 
  and $\bi{r} \in \Re^{c}$ is the normal contact reaction vector. 
  It is worth noting that $\bi{s}_{e}$ is work-conjugate to the 
  generalized strain $\bi{c}_{e}$ defined by 
  \eqref{P.def.complementary.1.2}. 
  Constraint \eqref{P.continuum.dual.compliance.1.4} corresponds to 
  the non-penetration condition of the unilateral contact law. 
  \finbox
\end{remark}

We are now in position to formulate the topology optimization problem. 
Application of the density filter can be written in the form 
\begin{align*}
  \bi{\rho} = H \bi{x} , 
\end{align*}
where $H \in \Re^{m \times m}$ is a constant matrix, 
$x_{e} \in [0,1]$ is the original density, 
and $\rho_{e}$ is the so-called physical density. 
The topology optimization problem minimizes the compliance evaluated at 
$\bi{\rho}$ under the volume constraint: 
\begin{subequations}\label{P.topology.1}%
  \begin{alignat}{3}
    & \MIN_{\bi{\rho}, \, \bi{x}}
    &{\quad}& 
    \pi(\bi{\rho})  \\
    & \ST && 
    \bi{\rho} = H \bi{x} , \\
    & &&
    \bi{0} \le \bi{x} \le \bi{1} , \\
    & && 
    \bi{1}^{\top} \bi{\rho} \le v . 
  \end{alignat}
\end{subequations}

Application of \refthm{thm:continuum.duality} to problem 
\eqref{P.topology.1} yields the following form: 
\begin{subequations}\label{P.contact.topology.2}%
  \begin{alignat}{3}
    & \MIN_{\bi{\rho}, \, \bi{x}, \, \bi{w}, \, \bi{s}, \, \bi{t}, \, \bi{r}}
    &{\quad}& 
    \sum_{e=1}^{m} w_{e}
    - 2 \bi{g}^{\top} \bi{r}  \\
    & \ST && 
    w_{e} \rho_{e}^{p} 
    \ge \bi{s}_{e}^{\top} \kappa^{-1} \bi{s}_{e} , 
    \quad e=1,\dots,m, 
    \label{P.contact.topology.2.2} \\
    & && 
    \sum_{e=1}^{m} B_{e} \bi{s}_{e}
    = \bi{f} + D^{\top} \bi{t} + C_{\rr{n}}^{\top} \bi{r} , \\
    & && 
    \bi{r} \le \bi{0} , \\
    & && 
    \bi{\rho} = H \bi{x} , \\
    & && 
    \bi{0} \le \bi{x} \le \bi{1} , \\
    & && 
    \bi{1}^{\top} \bi{\rho} \le v . 
  \end{alignat}
\end{subequations}
Here, $\bi{\rho} \in \Re^{m}$, $\bi{x} \in \Re^{m}$, 
$\bi{w} \in \Re^{m}$, $\bi{s}_{e} \in \Re^{5}$ $(e=1,\dots,m)$, 
$\bi{t} \in \Re^{d}$, and $\bi{r} \in \Re^{c}$ are variables to be 
optimized. 
It is worth noting that an artificial small positive lower bound for 
$x_{e}$, which is usually used in topology optimization to avoid 
singularity of the stiffness matrix, is unnecessary for this formulation. 

In problem \eqref{P.contact.topology.2}, only constraint 
\eqref{P.contact.topology.2.2} is nonconvex (due to the SIMP 
penalization). 
To solve problem \eqref{P.contact.topology.2}, we sequentially solve 
SOCP problems that approximate \eqref{P.contact.topology.2}, 
in the same fashion as the sequential 
{\em semidefinite programming\/} (SDP) for 
nonlinear SDP problems \citep{KT06,KNKF05}. 
Let $\rho_{e}^{(k)}$ denote the incumbent value of variable $\rho_{e}$. 
We linearize $\rho_{e}^{p}$ in \eqref{P.contact.topology.2.2} at 
$\rho_{e}^{(k)}$ as 
\begin{align*}
  \rho_{e}^{p} 
  \simeq 
   p (\rho_{e}^{(k)})^{p-1} \rho_{e} 
  + (1-p) (\rho_{e}^{(k)})^{p} . 
\end{align*}
Then constraint \eqref{P.contact.topology.2.2} is approximated as 
\begin{align*}
  \begin{bmatrix}
    \kappa^{-1/2} \bi{s}_{e} \\
    w_{e} \\
    p (\rho_{e}^{(k)})^{p-1} \rho_{e}
    + (1-p) (\rho_{e}^{(k)})^{p} \\
  \end{bmatrix}
  \in \KC^{7} , 
\end{align*}
where $\kappa^{-1/2}$ is the symmetric square root of $\kappa^{-1}$, 
i.e., $\kappa^{-1/2} \in \SC^{5}$ and 
$\kappa^{-1/2} \kappa^{-1/2} = \kappa^{-1}$. 
Thus, we can construct an SOCP subproblem that approximates 
problem \eqref{P.contact.topology.2}.

\section{Numerical examples}
\label{sec:ex}

This section presents some numerical examples for the approaches 
proposed in section~\ref{sec:truss} and section~\ref{sec:body}. 
Numerical experiments were carried out on 
a $2.2\,\mathrm{GHz}$ Intel Core i5 processor with $8\,\mathrm{GB}$ RAM. 

\subsection{Example (I): Trusses}
\label{sec:ex.1}

\begin{figure}[tbp]
  \centering
  \includegraphics[scale=0.35]{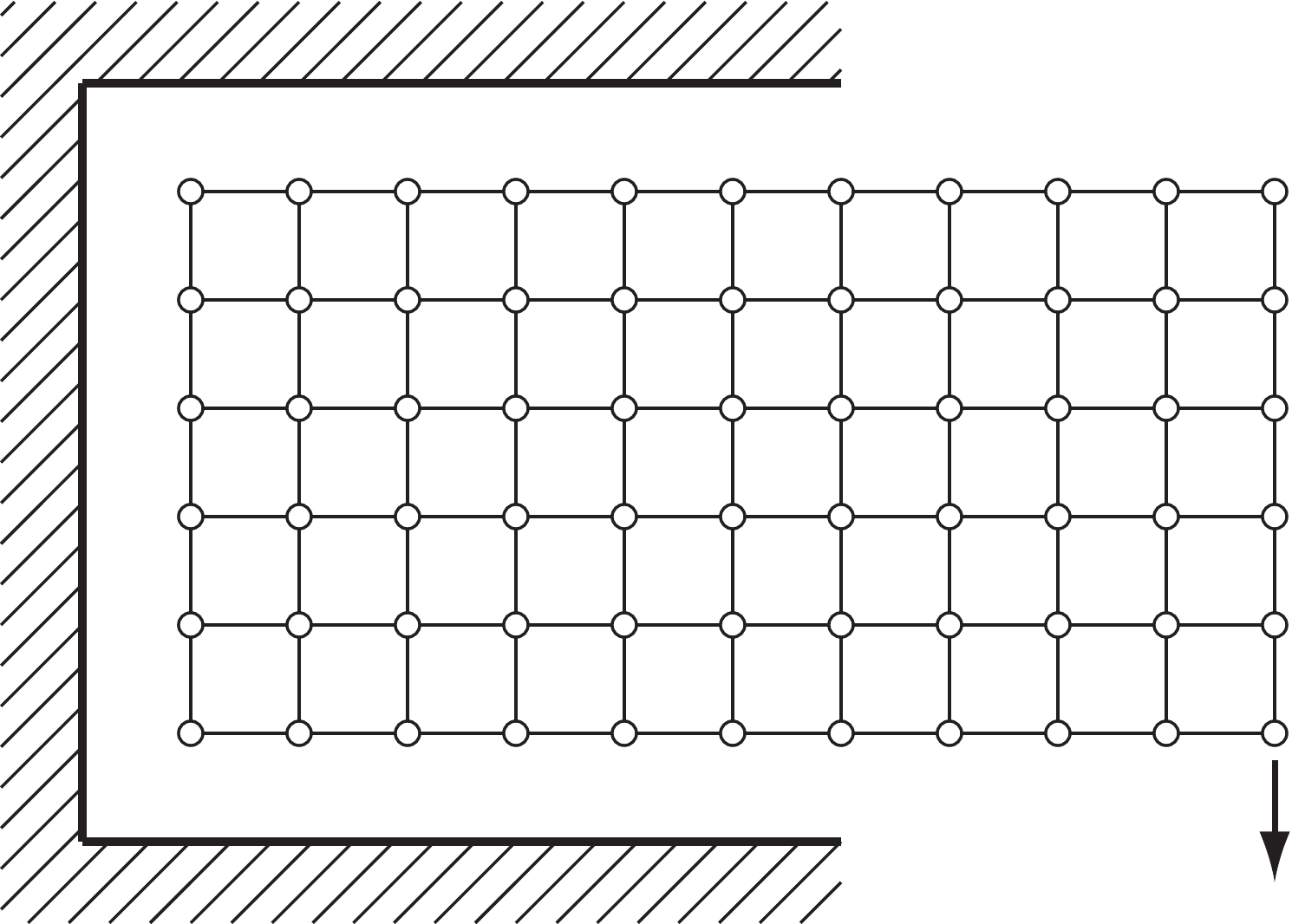}
  \caption{Problem setting of example (I). }
  \label{fig:initial_truss}
\end{figure}

\begin{figure}[tbp]
  \centering
  \subfloat[]{
  \label{fig:x10_y5_gap0_downward_cont}
  \includegraphics[scale=0.45]{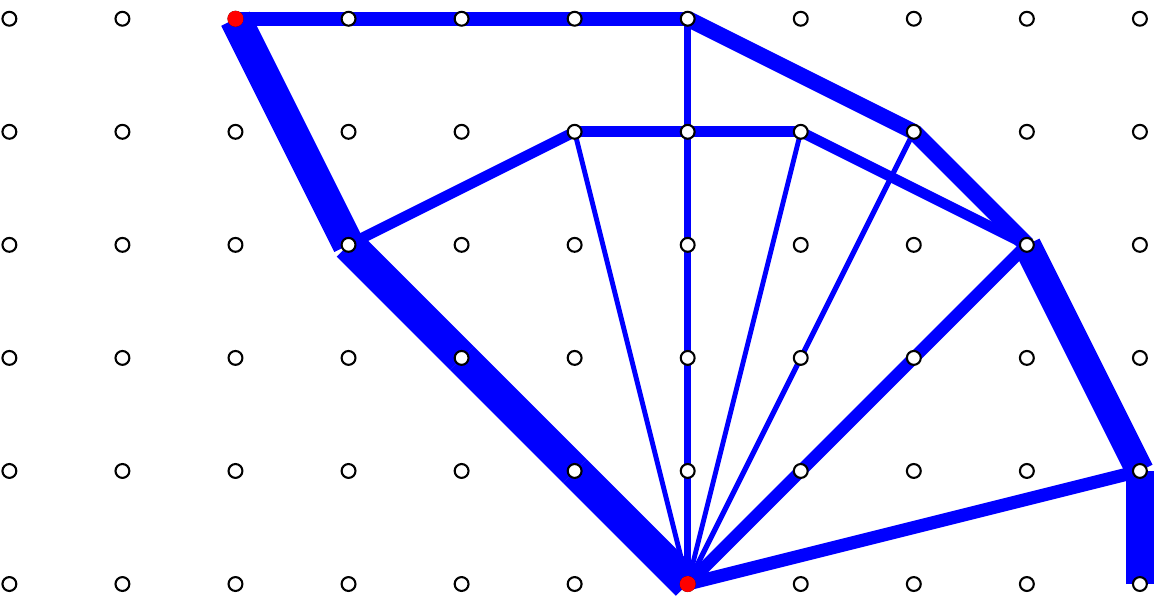}
  }
  \quad
  \subfloat[]{
  \label{fig:x10_y5_gap1_downward_cont}
  \includegraphics[scale=0.45]{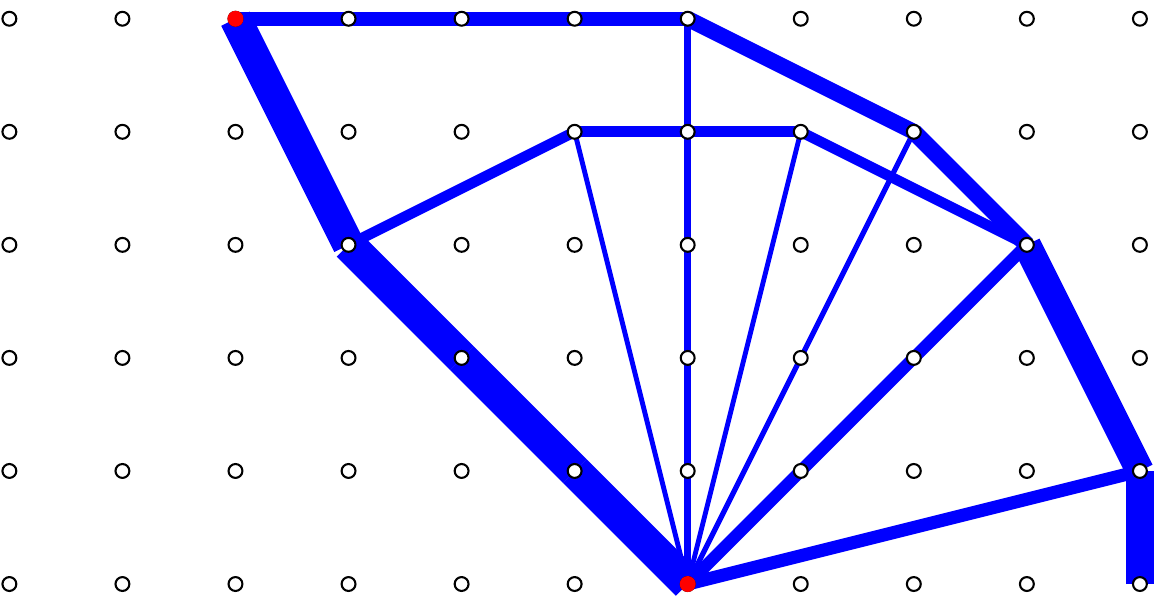}
  }
  \par
  \subfloat[]{
  \label{fig:x10_y5_gap2_downward_cont}
  \includegraphics[scale=0.45]{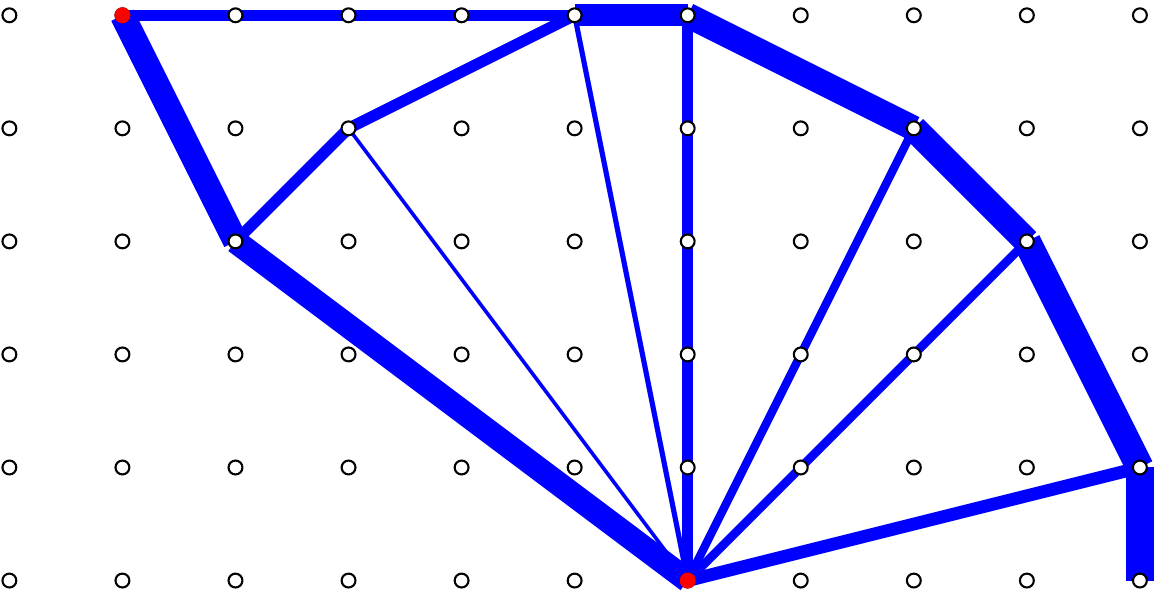}
  }
  \quad
  \subfloat[]{
  \label{fig:x10_y5_gap3_downward_cont}
  \includegraphics[scale=0.45]{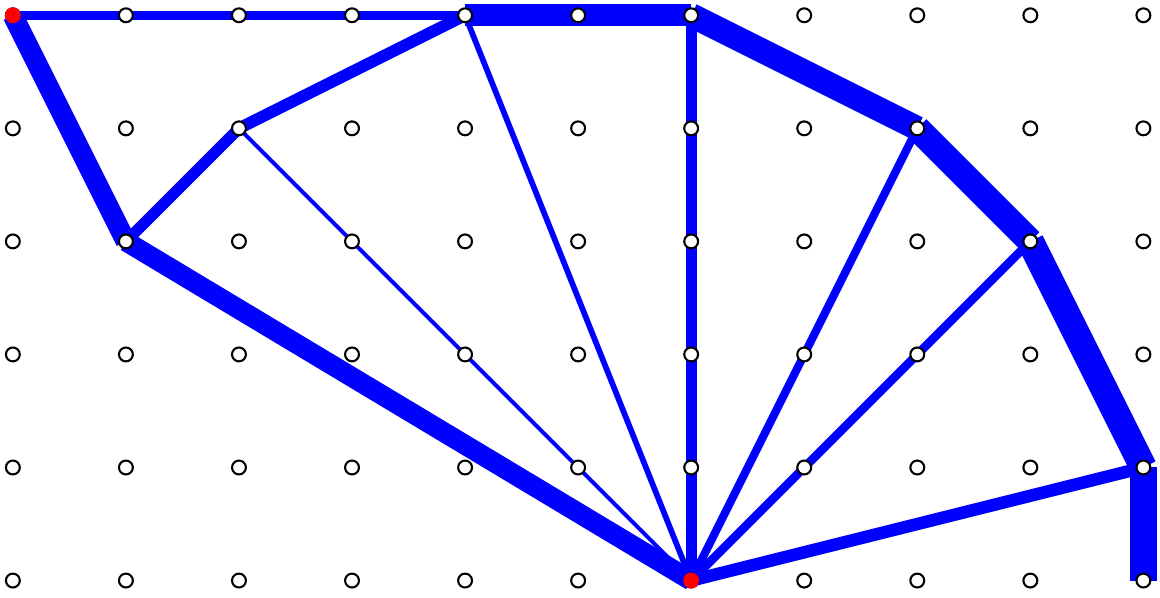}
  }
  \caption[]{The optimal solutions of problem \eqref{P.truss.SOCP.1} 
  (example (I)). 
  The initial gaps are 
  \subref{fig:x10_y5_gap0_downward_cont}~$g_{j}=0$; 
  \subref{fig:x10_y5_gap1_downward_cont}~$g_{j}=0.25\,\mathrm{mm}$; 
  \subref{fig:x10_y5_gap2_downward_cont}~$g_{j}=0.5\,\mathrm{mm}$; and 
  \subref{fig:x10_y5_gap3_downward_cont}~$g_{j}=0.75\,\mathrm{mm}$. 
  }
  \label{fig:x10_y5_gap*_downward_cont}
\end{figure}

\begin{figure}[tbp]
  \centering
  \includegraphics[scale=0.45]{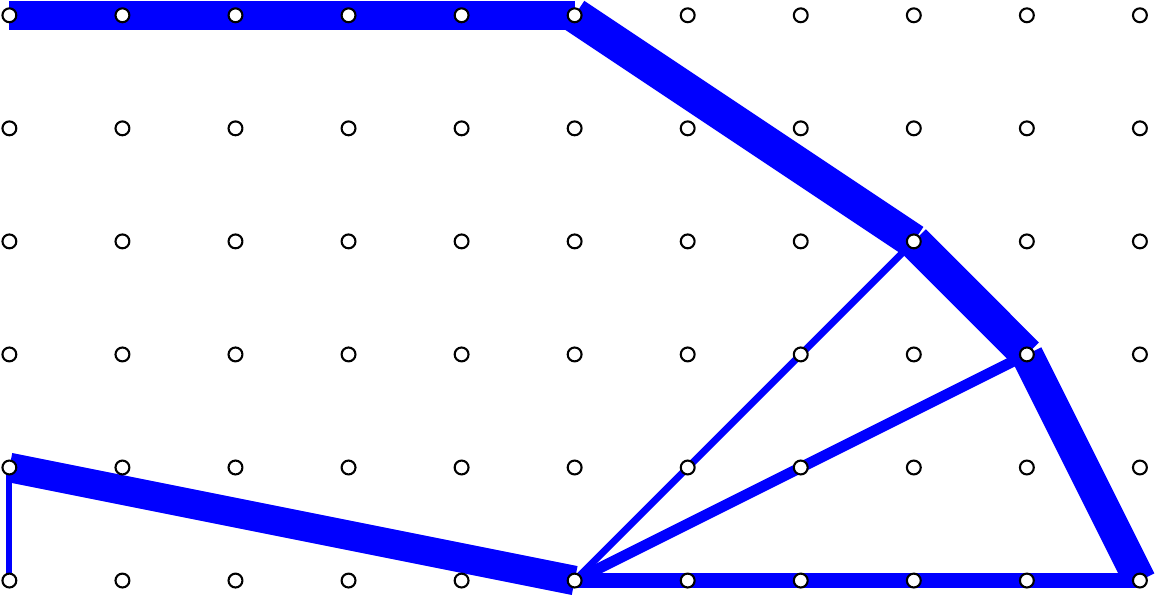}
  \caption{The optimal solution with bilateral contact conditions in 
  example (I). The initial gaps are $g_{j}=0$. }
  \label{fig:x10_y5_gap0_downward_bi_cont}
\end{figure}

\begin{figure}[tbp]
  \centering
  \subfloat[]{
  \label{fig:x10_y5_gap0_downward_no_slen_deg5}
  \includegraphics[scale=0.45]{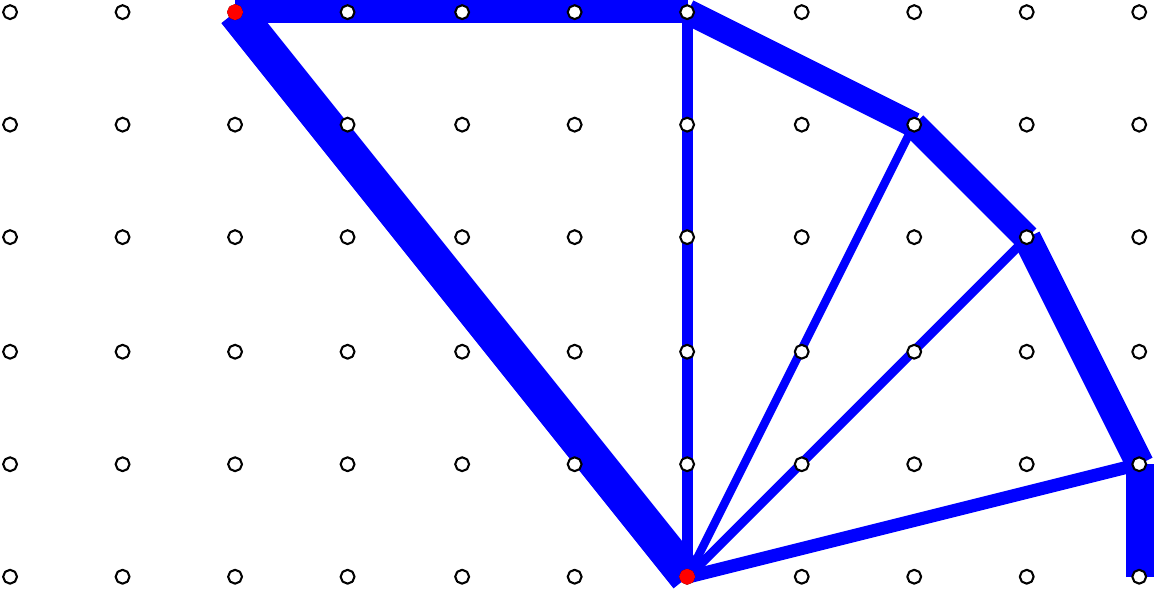}
  }
  \quad
  \subfloat[]{
  \label{fig:x10_y5_gap1_downward_no_slen_deg5}
  \includegraphics[scale=0.45]{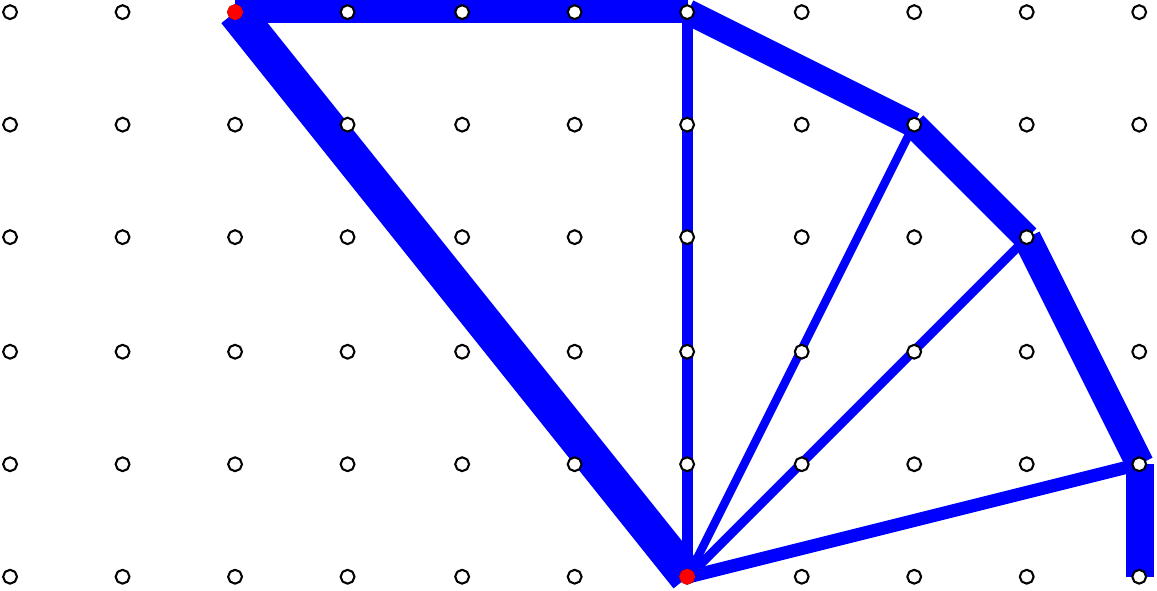}
  }
  \par
  \subfloat[]{
  \label{fig:x10_y5_gap2_downward_no_slen_deg5}
  \includegraphics[scale=0.45]{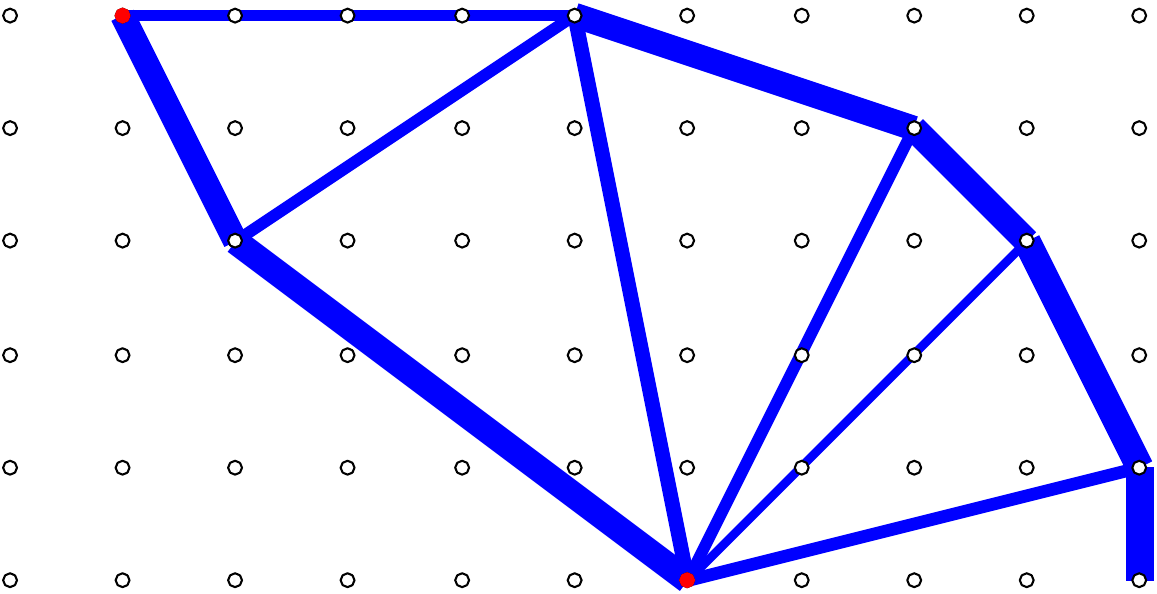}
  }
  \quad
  \subfloat[]{
  \label{fig:x10_y5_gap3_downward_no_slen_deg5}
  \includegraphics[scale=0.45]{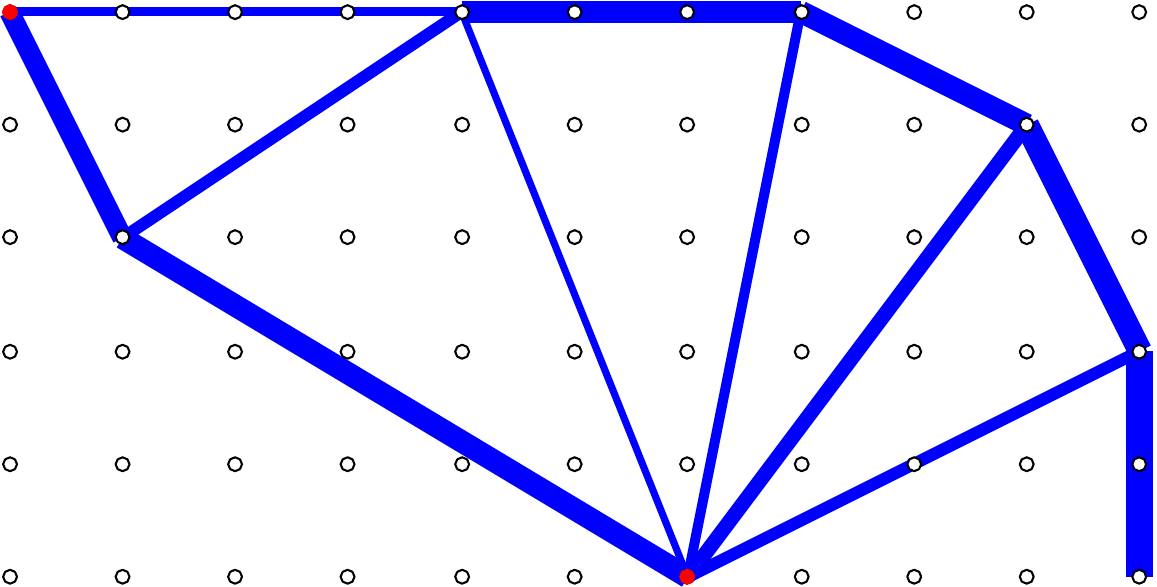}
  }
  \caption[]{The optimal truss designs obtained with 
  $\delta^{\rr{max}}=5$ and
  $(x^{\rr{min}},x^{\rr{max}})=(500,5000)\,\mathrm{mm^{2}}$ (example (I)). 
  The initial gaps are 
  \subref{fig:x10_y5_gap0_downward_no_slen_deg5}~$g_{j}=0$; 
  \subref{fig:x10_y5_gap1_downward_no_slen_deg5}~$g_{j}=0.25\,\mathrm{mm}$; 
  \subref{fig:x10_y5_gap2_downward_no_slen_deg5}~$g_{j}=0.5\,\mathrm{mm}$; and 
  \subref{fig:x10_y5_gap3_downward_no_slen_deg5}~$g_{j}=0.75\,\mathrm{mm}$. }
  \label{fig:x10_y5_gap*_downward_no_slen_deg5}
\bigskip
  \centering
  \subfloat[]{
  \label{fig:x10_y5_gap0_downward_no_slen_deg4}
  \includegraphics[scale=0.45]{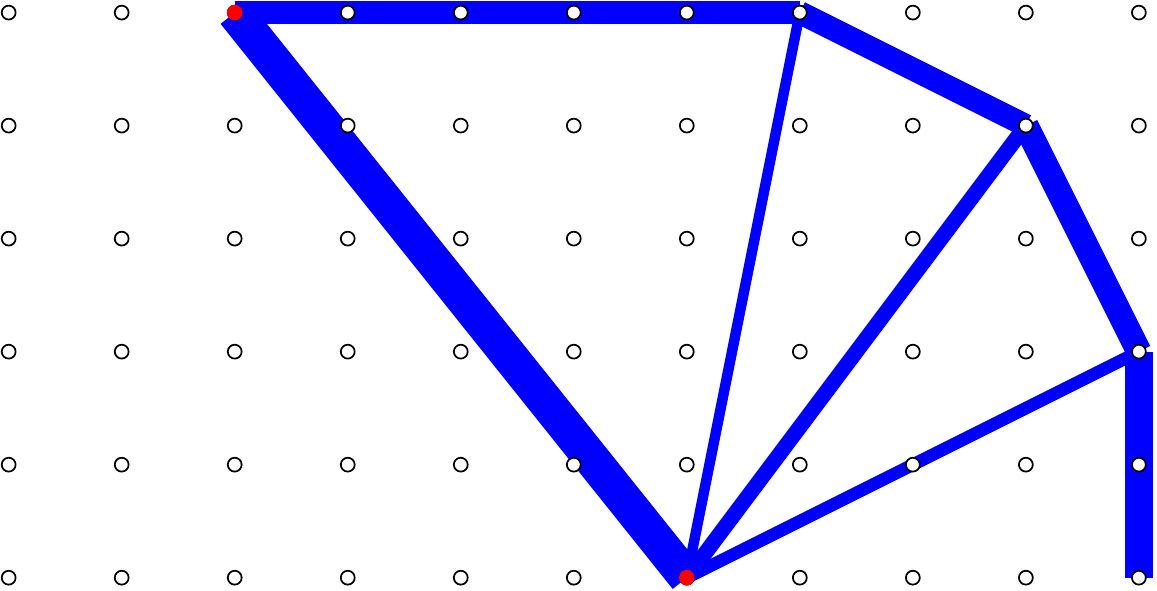}
  }
  \quad
  \subfloat[]{
  \label{fig:x10_y5_gap1_downward_no_slen_deg4}
  \includegraphics[scale=0.45]{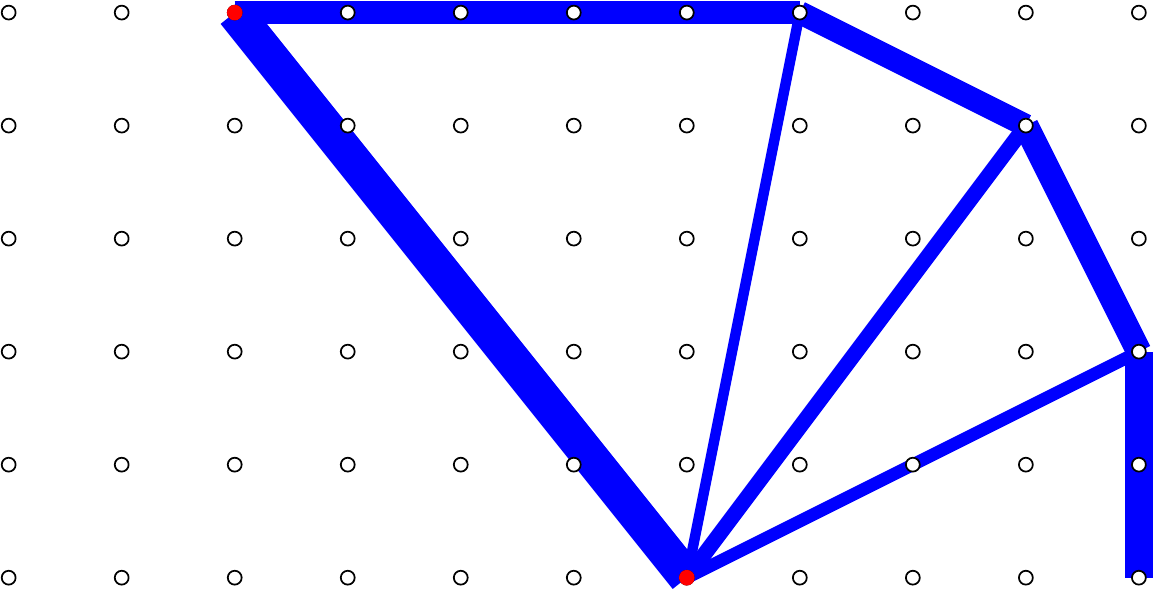}
  }
  \par
  \subfloat[]{
  \label{fig:x10_y5_gap2_downward_no_slen_deg4}
  \includegraphics[scale=0.45]{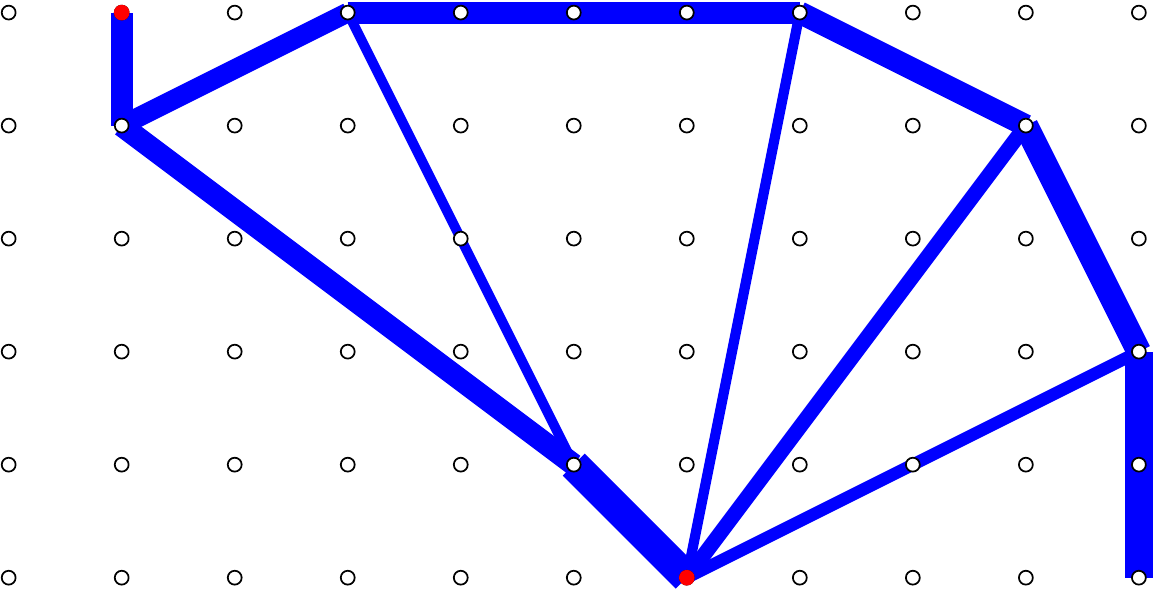}
  }
  \quad
  \subfloat[]{
  \label{fig:x10_y5_gap3_downward_no_slen_deg4}
  \includegraphics[scale=0.45]{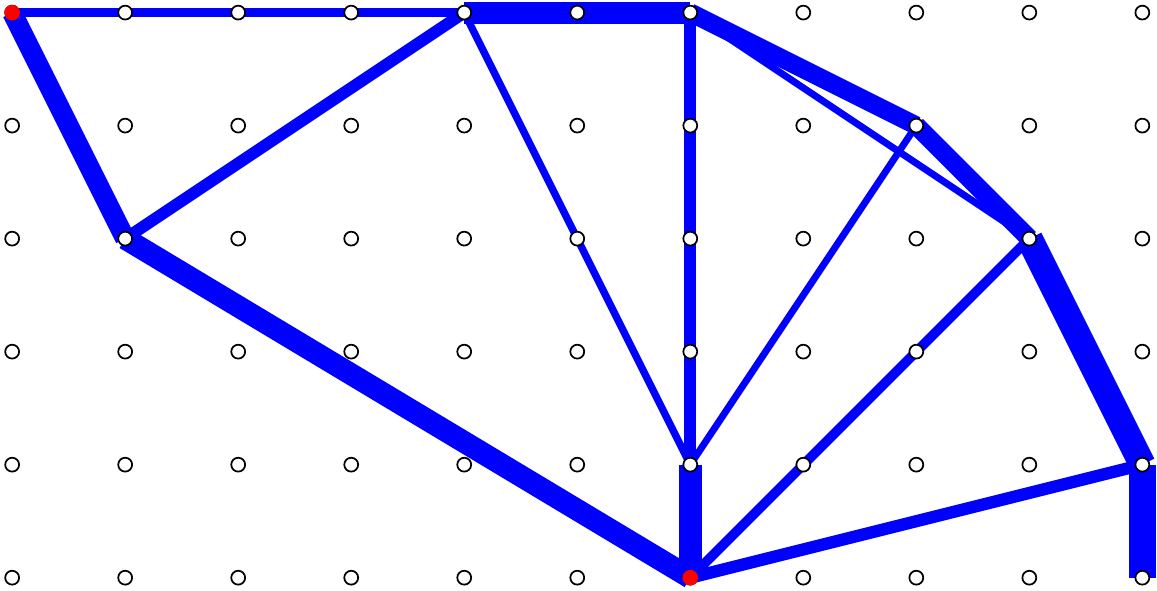}
  }
  \caption[]{The optimal truss designs obtained with 
  $\delta^{\rr{max}}=4$ and
  $(x^{\rr{min}},x^{\rr{max}})=(500,5000)\,\mathrm{mm^{2}}$ (example (I)). 
  The initial gaps are 
  \subref{fig:x10_y5_gap0_downward_no_slen_deg4}~$g_{j}=0$; 
  \subref{fig:x10_y5_gap1_downward_no_slen_deg4}~$g_{j}=0.25\,\mathrm{mm}$; 
  \subref{fig:x10_y5_gap2_downward_no_slen_deg4}~$g_{j}=0.5\,\mathrm{mm}$; and 
  \subref{fig:x10_y5_gap3_downward_no_slen_deg4}~$g_{j}=0.75\,\mathrm{mm}$. }
  \label{fig:x10_y5_gap*_downward_no_slen_deg4}
\bigskip
  \centering
  \includegraphics[scale=0.45]{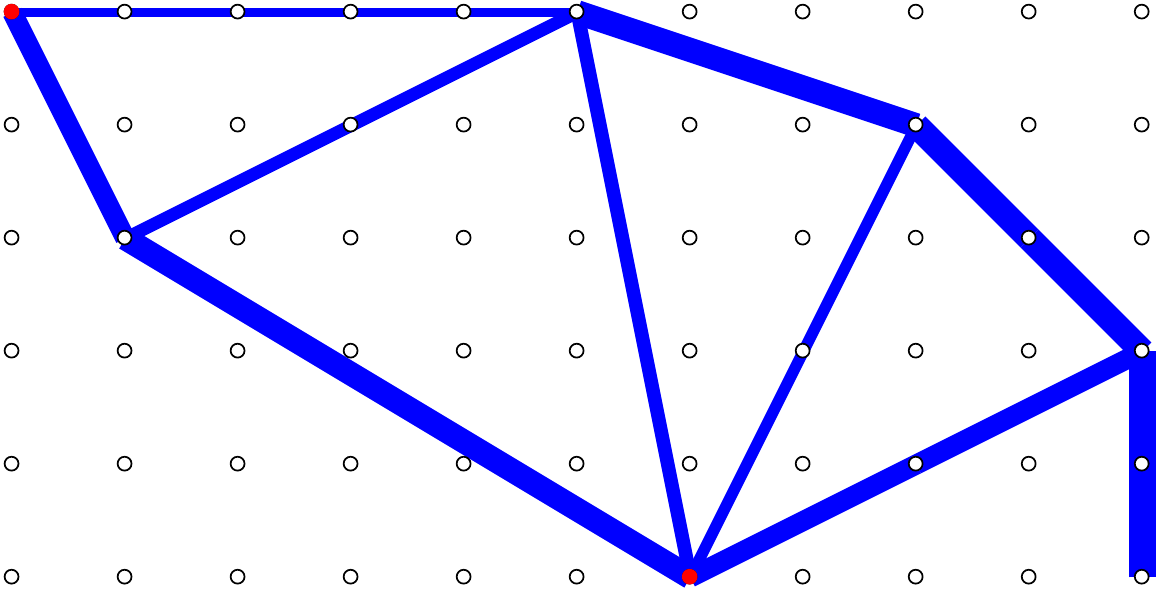}
  \caption{The optimal solution for the problem setting 
  \reffig{fig:x10_y5_gap3_downward_no_slen_deg4} with the constraints 
  prohibiting presence of mutually crossing members (example (I)). }
  \label{fig:x10_y5_no_cross_gap3_downward_no_slen_deg4}
\end{figure}

\begin{table}[bp]
  \centering
  \caption{Computational results of example (I). }
  \label{tab:ex.result.I}
  \begin{tabular}{rrrrrrrr}
    \toprule
    Contact & $g_{j}$ (mm) & $x^{\rr{min}}$ ($\mathrm{mm^{2}}$) 
    & $\delta^{\rr{max}}$ & Crssng.\ 
    & Obj.\ (J) & Time (s) & {\#}BnB-node \\
    \cmidrule(r){1-5}
    \cmidrule(l){6-8}
    Bi-latl.\  
    & $0.00$ & $0$ & $+\infty$ & acpt.\ 
    & $579.204506$ & $0.2$ & --- \\
    Uni-latl.\ 
    & $0.00$ & $0$ & $+\infty$ & acpt.\ 
    & $597.530875$ & $0.3$ & --- \\
    Uni-latl.\ 
    & $0.25$ & $0$ & $+\infty$ & acpt.\ 
    & $747.530865$ & $0.3$ & --- \\
    Uni-latl.\ 
    & $0.50$ & $0$ & $+\infty$ & acpt.\ 
    & $878.623871$ & $0.3$ & --- \\
    Uni-latl.\ 
    & $0.75$ & $0$ & $+\infty$ & acpt.\ 
    & $996.289422$ & $0.3$ & --- \\
    Uni-latl.\ 
    & $0.00$ & $500$ & $5$ & acpt.\ 
    & $600.794536$ & $43.5$ & $89$ \\
    Uni-latl.\ 
    & $0.25$ & $500$ & $5$ & acpt.\ 
    & $750.794549$ & $555.8$ & $1279$ \\
    Uni-latl.\ 
    & $0.50$ & $500$ & $5$ & acpt.\ 
    & $889.218107$ & $3909.5$ & $9838$ \\
    Uni-latl.\ 
    & $0.75$ & $500$ & $5$ & acpt.\ 
    & $1006.863373$ & $1375.9$ & $3434$ \\
    Uni-latl.\ 
    & $0.00$ & $500$ & $4$ & acpt.\ 
    & $606.950419$ & $1501.7$ & $3902$ \\
    Uni-latl.\ 
    & $0.25$ & $500$ & $4$ & acpt.\ 
    & $756.950422$ & $2208.3$ & $5809$ \\
    Uni-latl.\ 
    & $0.50$ & $500$ & $4$ & acpt.\ 
    & $896.874060$ & $27576.6$ & $83198$ \\
    Uni-latl.\ 
    & $0.75$ & $500$ & $4$ & acpt.\ 
    & $1015.913424$ & $12992.3$ & $39821$ \\
    Uni-latl.\ 
    & $0.75$ & $500$ & $4$ & proh.\ 
    & $1023.750780$ & $6739.3$ & $25258$ \\
    \bottomrule
  \end{tabular}
\end{table}

In this section, we solve the truss topology optimization problem in 
\eqref{P.truss.SOCP.1}, with incorporating some additional design 
constraints. 
Associated with each truss member, we introduce a binary variable to 
indicate whether the member exists or vanishes. 
Then, we can consider various design constraints, including 
lower bound constraints on the cross-sectional areas of existing 
members \citep{KY17}, 
an upper bound constraint on the number of nodes \citep{KF18}, 
limitation of the number of different values of cross-sectional 
areas \citep{Kan16,Kan19admm}, 
and upper bound constraints on the degrees of nodes \citep{KOG19}. 
All of these constraints can be treated within the framework of 
{\em mixed-integer second-order cone programming\/} (MISOCP). 

SOCP and MISOCP problems were solved with CPLEX ver.~12.8.0 \citep{CPLEX}, 
where the problem data were prepared in the CPLEX LP file format by 
the code implemented in Matlab ver.~9.0.0.  

As for MIP parameters of CPLEX, we set 
\texttt{mip tolerances integrality} (the amount by which each 0-1 
variable can be different from an integer) to $10^{-6}$, 
\texttt{mip tolerances mipgap} (the relative tolerance on the gap 
between the best feasible objective value and the objective value of the 
best node remaining) to $10^{-6}$, 
\texttt{emphasis numerical} (the emphasis on numerical precision) 
to \texttt{yes}, 
\texttt{emphasis mip} to $4$ (i.e., finding hidden feasible solutions is 
emphasized). 

\reffig{fig:initial_truss} depicts a problem setting. 
The nodes of the truss are aligned on 
a $1\,\mathrm{m} \times 1\,\mathrm{m}$ grid. 
Any pair of two nodes is connected by members aligned in a straight manner, 
where overlapping of members is avoided by removing the longer member 
when two members overlap. 
Accordingly, the truss has $m=1361$ members and $n=132$ degrees of 
freedom of the nodal displacements. 
It has no node at which the nodal displacement is prescribed, i.e., $d=0$. 
The number of contact candidate nodes is $c=20$. 
The initial gaps between the contact candidate nodes and the obstacle 
surface are uniformly distributed, i.e., $g_{1}=\dots=g_{c}$. 
The Young modulus of the members is $E=20\,\mathrm{GPa}$. 
The downward vertical external force of $100\,\mathrm{kN}$ is 
applied at the right bottom node. 

\reffig{fig:x10_y5_gap*_downward_cont} collects the obtained optimal 
solutions of problem \eqref{P.truss.SOCP.1} with various values of 
initial gaps. 
Here, the width of each member is proportional to its 
cross-sectional area, and filled circles indicate the contact candidate 
nodes that are in contact at the equilibrium state; the remaining 
contact candidate nodes are free. 
In every solution, only two nodes are in contact. 
For comparison, if we suppose that the contact conditions are bilateral 
(i.e., all the contact candidate nodes are replaced by roller supports), 
then the solution shown in \reffig{fig:x10_y5_gap0_downward_bi_cont} is 
obtained. 
The computational results of these examples are listed in the first five 
rows of \reftab{tab:ex.result.I}. 
Here, ``obj.''\ reports the optimal value, and 
``time'' is the computational time spent by CPLEX. 

We next consider an upper bound constraint on the degree of 
each node \citep{KOG19}.\footnote{%
The degree of a node is the number of members connected to the node. } 
It is worth noting that, in every solution shown in 
\reffig{fig:x10_y5_gap*_downward_cont}, the degree of the bottom center 
node is $7$. 
We use $\delta^{\rr{max}}$ to denote the specified upper bound for the 
nodal degree. 
Also, we consider  the lower and upper bounds, denoted $x^{\rr{min}}$ 
and $x^{\rr{max}}$, respectively, for the member cross-sectional areas. 
Namely, we incorporate the constraints \citep{KY17,Kan16} 
\begin{align*}
  x_{e} \in \{ 0 \} \cup [ x^{\rr{min}}, x^{\rr{max}} ] , 
  \quad e=1,\dots,m
\end{align*}
with $x^{\rr{min}}=500\,\mathrm{mm^{2}}$ 
and $x^{\rr{max}}=5000\,\mathrm{mm^{2}}$. 
\reffig{fig:x10_y5_gap*_downward_no_slen_deg5} and 
\reffig{fig:x10_y5_gap*_downward_no_slen_deg4} collects the obtained 
optimal solutions with $\delta^{\rr{max}}=5$ and  $\delta^{\rr{max}}=4$, 
respectively. 
The computational results are reported in \reftab{tab:ex.result.I}, where 
``{\#}BnB-node'' is the number of enumeration nodes explored by CPLEX. 

We focus attention on the solution in 
\reffig{fig:x10_y5_gap3_downward_no_slen_deg4}, which has a pair of 
mutually crossing members. 
By incorporating the constraints that prohibit presence of mutually 
crossing members \citep{Kan16}, we obtain the optimal solution shown in 
\reffig{fig:x10_y5_no_cross_gap3_downward_no_slen_deg4}. 
The computational results are reported in the bottom row of 
\reftab{tab:ex.result.I}. 

Thus, we can solve optimization problems with various design constraints, 
because our formulation in \eqref{P.truss.SOCP.1} is convex and retains 
the member cross-sectional areas as explicit optimization variables.

\subsection{Example (II): Continua}
\label{sec:ex.2}

\begin{figure}[tbp]
  \centering
  \includegraphics[scale=0.40]{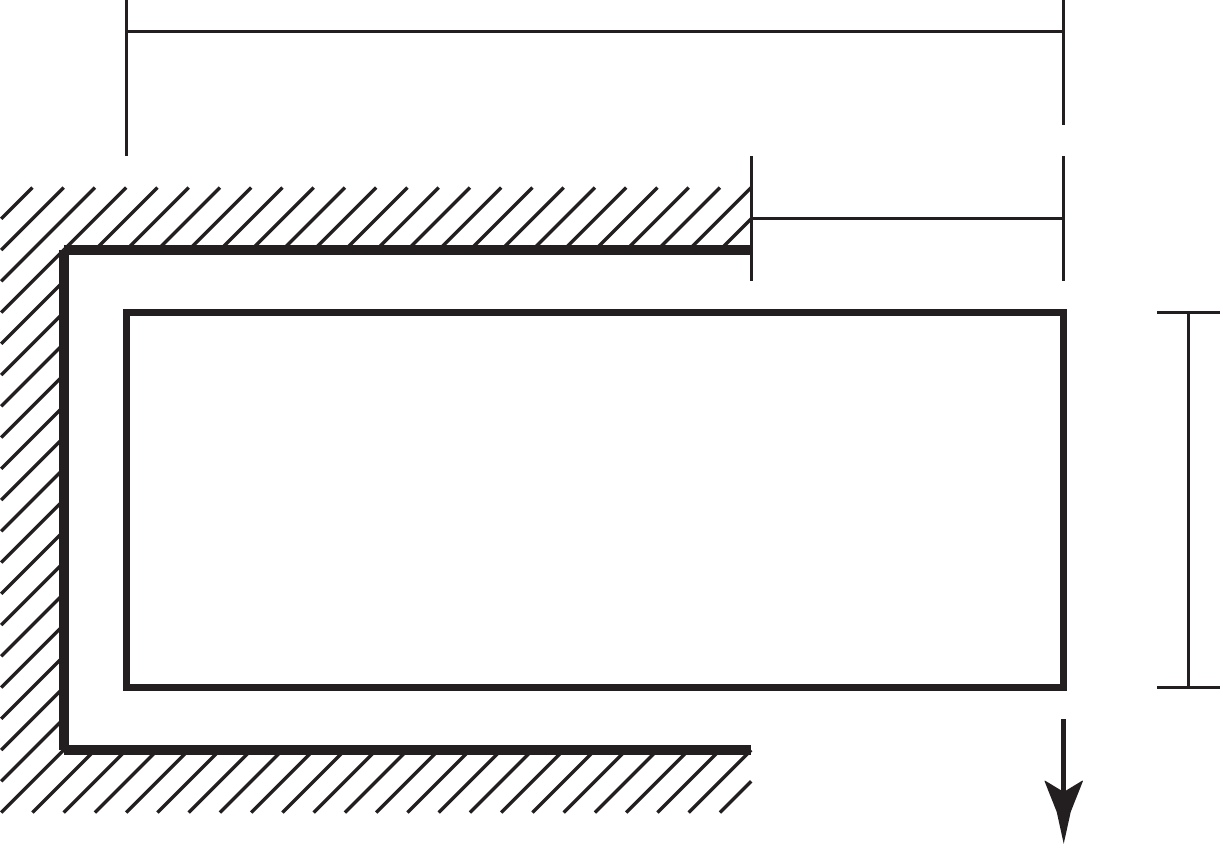}
  \begin{picture}(0,0)
    \put(-150,-50){
    \put(67,136.5){{\footnotesize $80$ }}
    \put(104,126.5){{\footnotesize $27$ }}
    \put(145,88){{\footnotesize $32$ }}
    }
  \end{picture}
  \caption{Problem setting of example (II).}
  \label{fig:ex_body_gap}
\end{figure}

\begin{figure}[tbp]
  \centering
  \subfloat[]{
  \label{fig:x80_y32_gap0_downward}
  \includegraphics[scale=0.50]{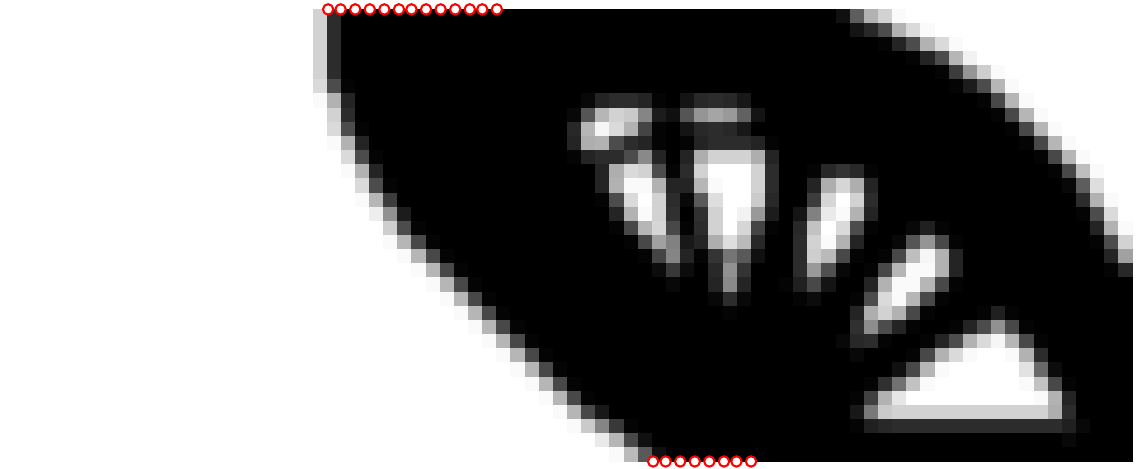}
  }
  \quad
  \subfloat[]{
  \label{fig:x80_y32_gap2_downward}
  \includegraphics[scale=0.50]{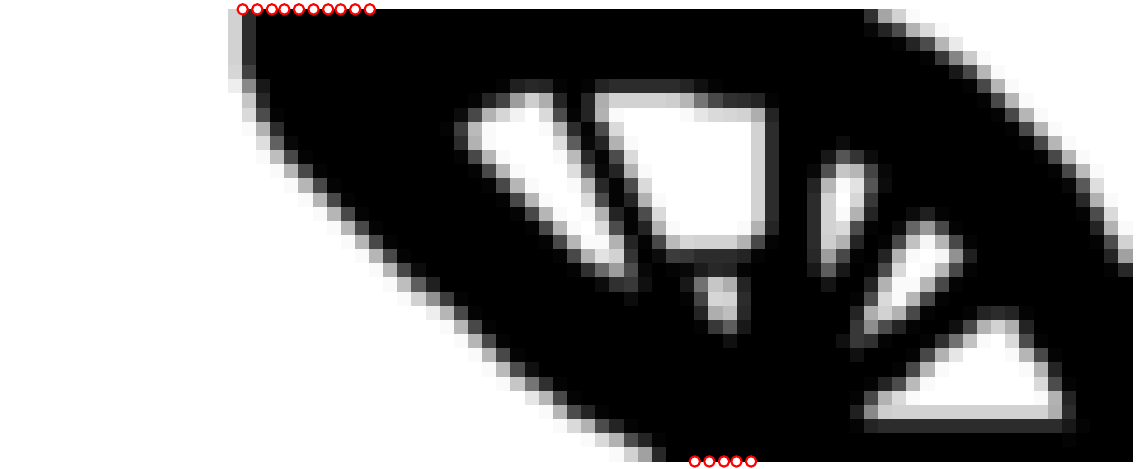}
  }
  \par
  \subfloat[]{
  \label{fig:x80_y32_gap4_downward}
  \includegraphics[scale=0.50]{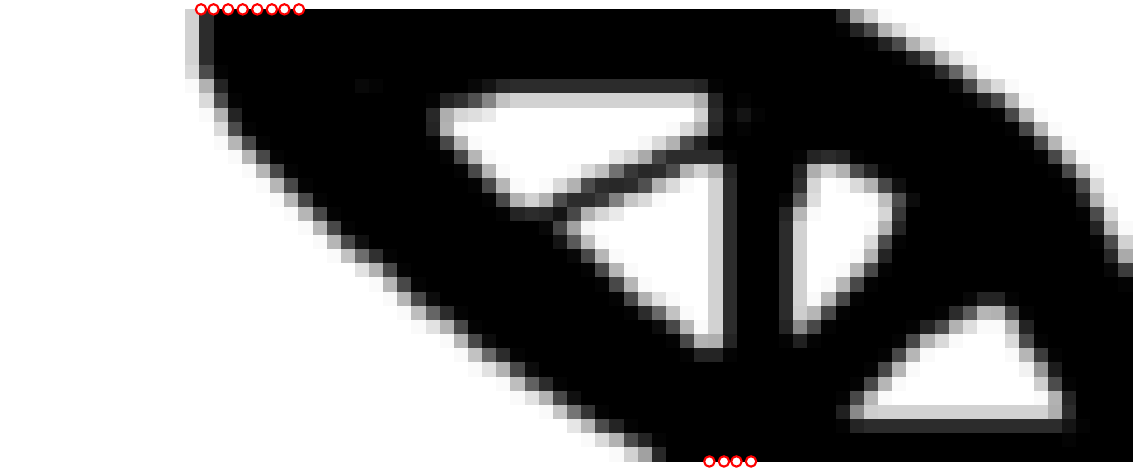}
  }
  \quad
  \subfloat[]{
  \label{fig:x80_y32_gap6_downward}
  \includegraphics[scale=0.50]{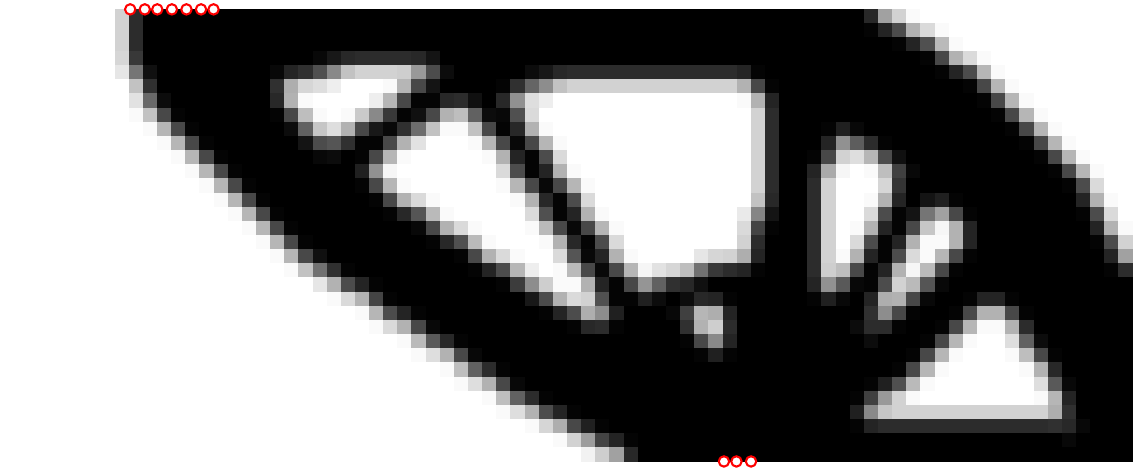}
  }
  \par
  \subfloat[]{
  \label{fig:x80_y32_gap8_downward}
  \includegraphics[scale=0.50]{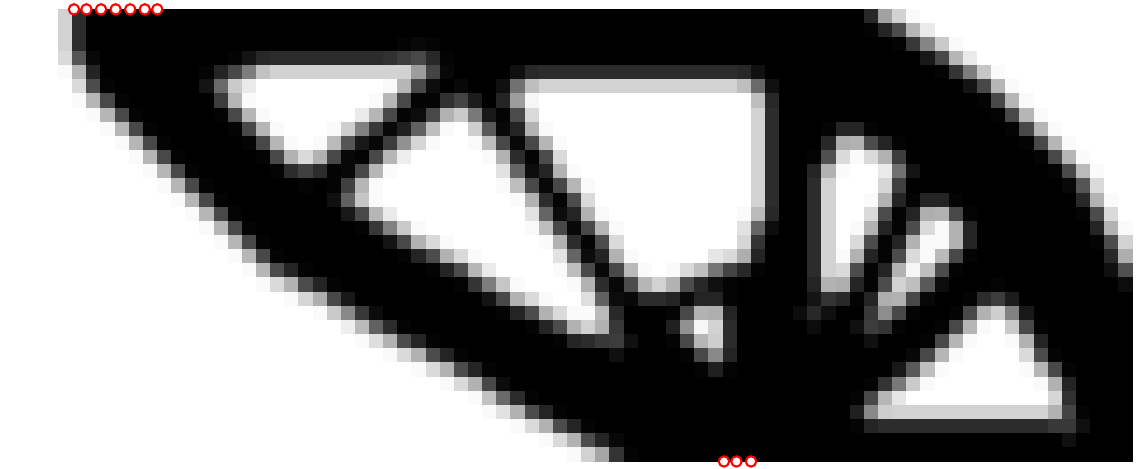}
  }
  \quad
  \subfloat[]{
  \label{fig:x80_y32_gap10_downward}
  \includegraphics[scale=0.50]{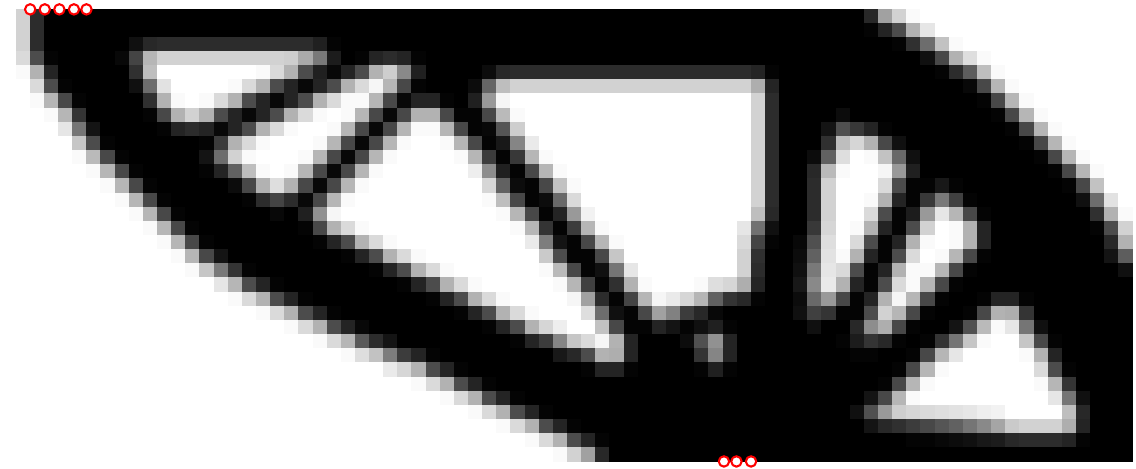}
  }
  \caption[]{%
  The solutions obtained for example (II) with the downward external load. 
  The initial gaps are 
  \subref{fig:x80_y32_gap0_downward}~$g_{j}=0$; 
  \subref{fig:x80_y32_gap2_downward}~$g_{j}=2$; 
  \subref{fig:x80_y32_gap4_downward}~$g_{j}=4$; 
  \subref{fig:x80_y32_gap6_downward}~$g_{j}=6$; 
  \subref{fig:x80_y32_gap8_downward}~$g_{j}=8$; and 
  \subref{fig:x80_y32_gap10_downward}~$g_{j}=10$. 
  }
  \label{fig:x80_y32_gap*_downward}
\end{figure}

\begin{figure}[tbp]
  \centering
  \subfloat[]{
  \label{fig:x80_y32_gap0_upward}
  \includegraphics[scale=0.50]{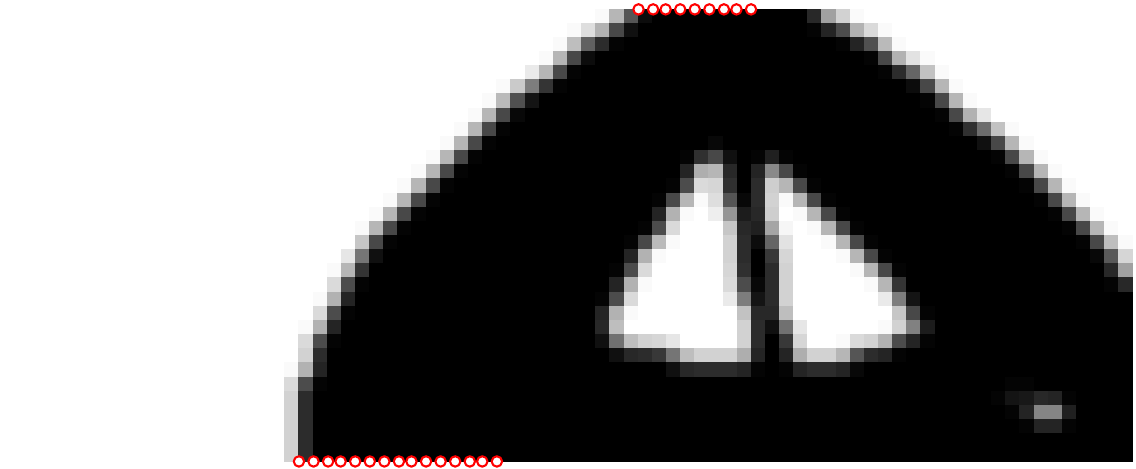}
  }
  \quad
  \subfloat[]{
  \label{fig:x80_y32_gap2_upward}
  \includegraphics[scale=0.50]{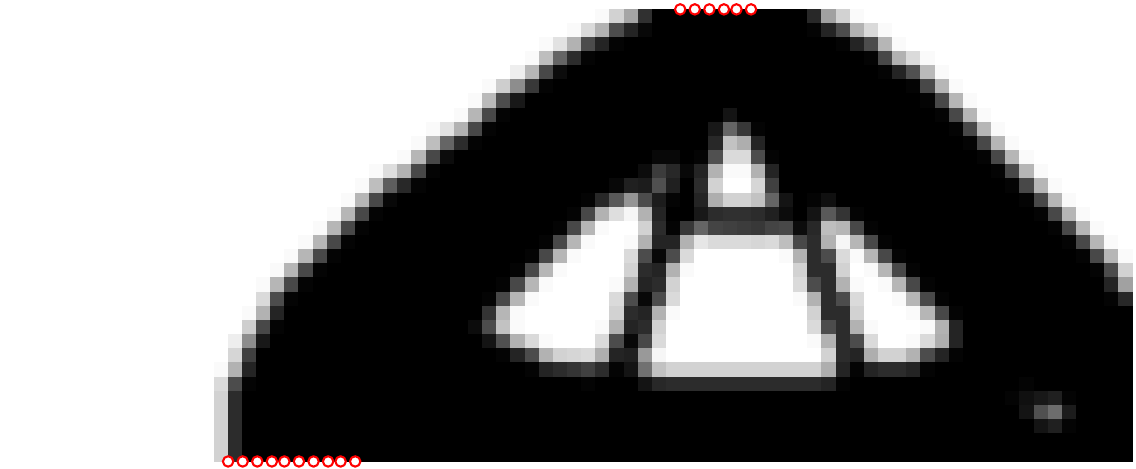}
  }
  \par
  \subfloat[]{
  \label{fig:x80_y32_gap4_upward}
  \includegraphics[scale=0.50]{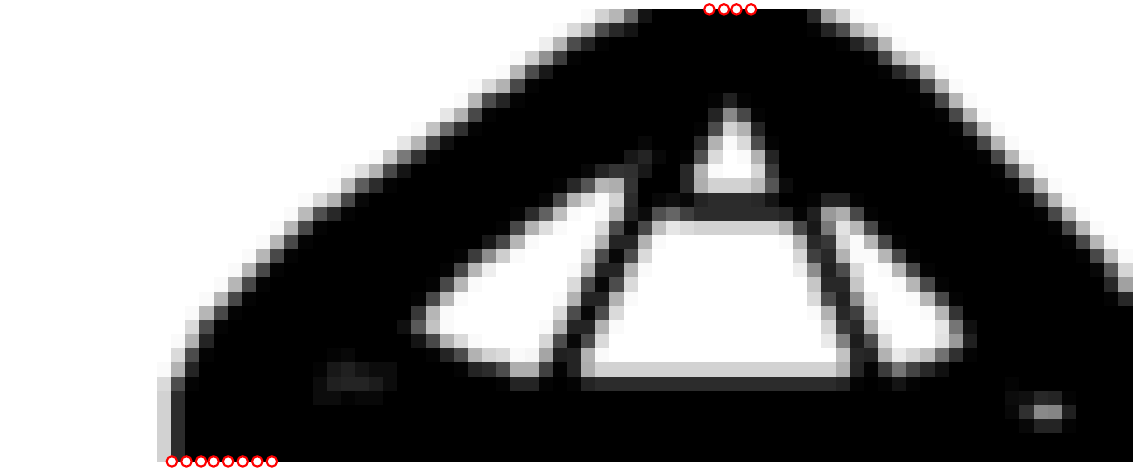}
  }
  \quad
  \subfloat[]{
  \label{fig:x80_y32_gap6_upward}
  \includegraphics[scale=0.50]{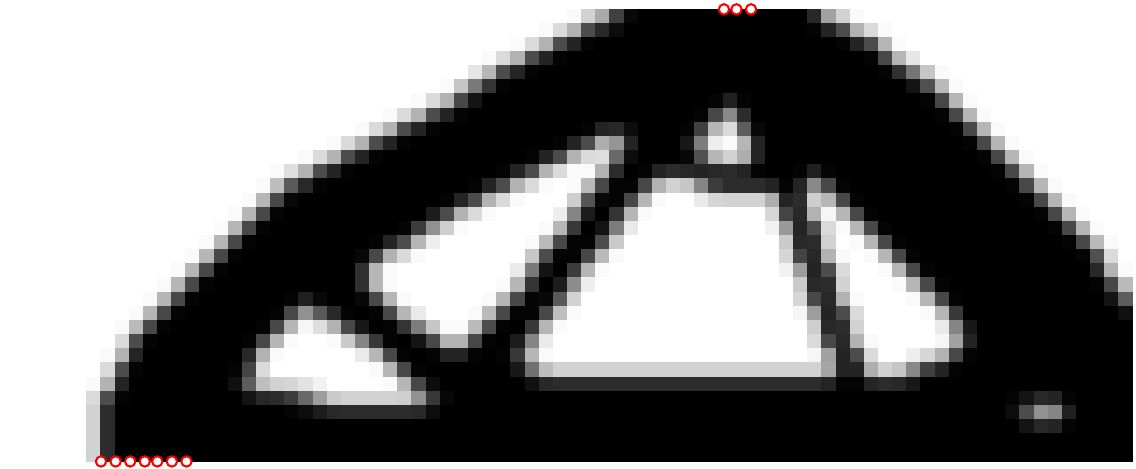}
  }
  \caption[]{%
  The solutions obtained for example (II) with the downward external load. 
  The initial gaps are 
  \subref{fig:x80_y32_gap0_upward}~$g_{j}=0$; 
  \subref{fig:x80_y32_gap2_upward}~$g_{j}=2$; 
  \subref{fig:x80_y32_gap4_upward}~$g_{j}=4$; and 
  \subref{fig:x80_y32_gap6_upward}~$g_{j}=6$. 
  }
  \label{fig:x80_y32_gap*_upward}
\end{figure}

\begin{table}[bp]
  \centering
  \caption{Computational results of example (II). }
  \label{tab:ex.result.II}
  \begin{tabular}{lrrrr}
    \toprule
    Ext.\ load & $g_{j}$ & Obj.\ & Time (s) & {\#}Iter.\ \\
    \midrule
    Downward & $0$ 
    & $29.2195$ & $1417.1$ & $51$ \\
    Downward & $2$ 
    & $41.2819$ & $2278.9$ & $81$ \\
    Downward & $4$ 
    & $51.7520$ & $2624.4$ & $79$ \\
    Downward & $6$ 
    & $61.2012$ & $2037.2$ & $61$ \\
    Downward & $8$ 
    & $69.9158$ & $3264.0$ & $115$ \\
    Downward & $10$ 
    & $78.2713$ & $3043.1$ & $95$ \\
    \midrule
    Upward & $0$ 
    & $27.4079$ & $2858.1$ & $113$ \\
    Upward & $2$ 
    & $39.4283$ & $1387.2$ & $44$ \\
    Upward & $4$ 
    & $49.5674$ & $1173.8$ & $34$ \\
    Upward & $6$ 
    & $58.6082$ & $2853.4$ & $103$ \\
    \bottomrule
  \end{tabular}
\end{table}

Consider a problem instance outlined in \reffig{fig:ex_body_gap}. 
The rectangular elastic body is discretized as $80 \times 32$ mesh, 
i.e., $m=2560$. 
There exists no node at which the nodal displacement is prescribed, i.e., 
$d=0$ and $n=5346$. 
The number of contact candidate nodes is $c=147$. 
The initial gaps between the elastic body and the obstacle surface are 
uniformly distributed, i.e., $g_{1}=\dots=g_{c}$. 
We omit the units of quantities for simplicity, as often done in 
literature on continuum-based topology optimization. 
The Young modulus and the Poisson ratio are $1$ and $0.3$, respectively. 
The external force of $1$ is applied at the right bottom corner. 

The sequential SOCP described in section~\ref{sec:body} was implemented 
in Matlab ver.~9.0.0. 
The implementation of the SIMP approach was based on the 
Matlab 88 line code in \cite{ACSLS11}. 
The penalization power and the filter radius divided by the element 
size are $p=3$ and $1.5$, respectively. 
The specified upper bound for the volume fraction is $0.5$. 
As for the initial point for the sequential SOCP, 
$\rho_{1}^{(0)},\dots,\rho_{m}^{(0)}$ are set to the volume fraction. 
We solved SOCP subproblems with SeDuMi ver.~1.3 \citep{Stu99,Pol05}, 
which implements a primal-dual interior-point method. 

\reffig{fig:x80_y32_gap*_downward} collects the obtained solutions for 
various values of the initial gaps, when the downward external force is 
applied. 
\reffig{fig:x80_y32_gap*_upward} collects the solutions obtained for the 
upward external force. 
\reftab{tab:ex.result.II} reports the computational results. 
Here, ``obj.''\ is the obtained objective value, 
``time'' is the computational time, and 
``{\#}iter.''\ is the number of iterations of the sequential SOCP (i.e., 
the number of SOCP subproblems solved before convergence). 

In \reffig{fig:x80_y32_gap*_downward} and 
\reffig{fig:x80_y32_gap*_upward}, 
small open circles indicate the contact candidate nodes that are in 
contact at the equilibrium state; the remaining contact candidate nodes 
are free. 
We can observe that the larger the initial gaps, the wider the optimal 
configuration becomes.

\section{Concluding remarks}
\label{sec:conclusions}

This paper has presented new formulations for topology optimization of 
structures with frictionless unilateral contact. 
Specifically, for trusses, we have seen that the stiffness maximization 
problem can be recast as an SOCP (second-order cone programming) problem. 
Also, for continua, the stiffness maximization problem is formulated as 
a nonlinear SOCP problem. 
This nonlinear SOCP formulation is suitable for application of a simple 
successive linearization method, in which we solve an SOCP subproblem at 
each iteration. 
The key to deriving these formulations is the Lagrange duality in the 
optimization problem defining the compliance of a structural design. 

Unlike the convex optimization approach in \citet{KZZ98} for trusses, 
the SOCP formulation presented in this paper retains the 
member cross-sectional areas as optimization variables. 
Therefore, various additional design constraints, e.g., limitation of the 
number of different cross-sectional areas \citep{Kan16} and limitation 
of the number of nodes \citep{KF18}, 
as well as the self-weight load \citep{KY17}, can be incorporated into 
this SOCP formulation. 
Problems with such constraints are formulated as MISOCP 
(mixed-integer second-order cone programming) problems, which can be 
solved globally with a standard mixed-integer programming solver. 
Extension of the presented approach to frame topology optimization with 
discrete design variables \citep{Kan16frame} is straightforward.

\paragraph{Acknowledgments}

The work described in this paper is partially supported by 
JSPS KAKENHI 17K06633.


\begin{thebibliography}{99}
\bibitem[\protect\citeauthoryear{Andreassen~{\em et al.\/}}{2011}]{ACSLS11}
  {E.~Andreassen, A.~Clausen, M.~Schevenels, B.~S.~Lazarov, O.~Sigmund}:
  {Efficient topology optimization in MATLAB using 88 lines of code}.
  {\em Structural and Multidisciplinary Optimization},
  \textbf{43}, 1--16 (2011).

\bibitem[\protect\citeauthoryear{Anjos and Lasserre}{2012}]{AL12}
  {M.~F.~Anjos, J.~B.~Lasserre (eds.)}:
  {\em Handbook on Semidefinite, Conic and Polynomial Optimization}.
  Springer, New York (2012).

\bibitem[\protect\citeauthoryear{Ben-Tal~{\em et al.\/}}{2000}]{BKNZ00}
  {A.~Ben-Tal, M.~Ko\v{c}vara, A.~Nemirovski, J.~Zowe}:
  {Free material design via semidefinite programming: 
    the multiload case with contact conditions}.
  {\em SIAM Review},
  \textbf{42}, 695--715 (2000).

\bibitem[\protect\citeauthoryear{Bends{\o}e and Sigmund}{1999}]{BS99}
  {M.~P.~Bends{\o}e, O.~Sigmund}:
  {Material interpolation schemes in topology optimization}.
  {\em Archive of Applied Mechanics},
  \textbf{69}, 635--654 (1999).

\bibitem[\protect\citeauthoryear{Bourdin}{2001}]{Bou01}
  {B.~Bourdin}:
  {Filters in topology optimization}.
  {\em International Journal for Numerical Methods in Engineering},
  \textbf{50}, 2143--2158 (2001).


\bibitem[\protect\citeauthoryear{Bruns and Tortorelli}{2001}]{BT01}
  {T.~E.~Bruns, D.~A.~Tortorelli}:
  {Topology optimization of non-linear elastic structures 
    and compliant mechanisms}.
  {\em Computer Methods in Applied Mechanics and Engineering},
  \textbf{190}, 3443--3459 (2001).

\bibitem[\protect\citeauthoryear{Ciarlet}{1989}]{Cia89}
  {P.~G.~Ciarlet}:
  {\em Introduction to Numerical Linear Algebra and Optimisation}.
  Cambridge University Press, Cambridge (1989).

\bibitem[\protect\citeauthoryear{Ekeland and T\'{e}mam}{1976}]{ET76}
  {I.~Ekeland, R.~T\'{e}mam}:
  {\em Convex Analysis and Variational Problems}.
  North-Holland, Amsterdam (1976);
  SIAM, Philadelphia (1999).

\bibitem[\protect\citeauthoryear{Fancello}{2006}]{Fan06}
  {E.~A.~Fancello}:
  {Topology optimization for minimum mass design 
    considering local failure constraints and contact boundary conditions}.
  {\em Structural and Multidisciplinary Optimization},
  \textbf{32}, 229--240 (2006).

\bibitem[\protect\citeauthoryear{Geniaut~{\em et al.\/}}{2007}]{GMM07}
  {S.~Geniaut, P.~Massin, N.~M\"{o}es}:
  {A stable 3D contact formulation using X-FEM}.
  {\em European Journal of Computational Mechanics},
  \textbf{16}, 259--275 (2007).


\bibitem[\protect\citeauthoryear{Hilding}{2000}]{Hil00}
  {D.~Hilding}:
  {A heuristic smoothing procedure for avoiding local optima 
    in optimization of structures subject to unilateral constraints}.
  {\em Structural and Multidisciplinary Optimization},
  \textbf{20}, 29--36 (2000).

\bibitem[\protect\citeauthoryear{Hilding and Klarbring}{2012}]{HK12}
  {D.~Hilding, A.~Klarbring}:
  {Optimization of structures in frictional contact}.
  {\em Computer Methods in Applied Mechanics and Engineering},
  \textbf{205--208}, 83--90 (2012).

\bibitem[\protect\citeauthoryear{Hilding~{\em et al.\/}}{1999}]{HKP99}
  {D.~Hilding, A.~Klarbring, J.-S.~Pang}:
  {Minimization of maximum unilateral force}.
  {\em Computer Methods in Applied Mechanics and Engineering},
  \textbf{177}, 215--234 (1999).

\bibitem[\protect\citeauthoryear{Hilding~{\em et al.\/}}{1999}]{HKP99survey}
  {D.~Hilding, A.~Klarbring, J.~Petersson}:
  {Optimization of structures in unilateral contact}.
  {\em Applied Mechanics Reviews},
  \textbf{52}, 139--160 (1999).

\bibitem[\protect\citeauthoryear{IBM ILOG}{2019}]{CPLEX}
  {IBM ILOG}:
  {\em IBM ILOG CPLEX Optimization Studio Documentation}.
  \url{http://www.ibm.com/support/knowledgecenter/}
  (Accessed January 2019).

\bibitem[\protect\citeauthoryear{Kanno}{2011}]{Kan11}
  {Y.~Kanno}:
  {\em Nonsmooth Mechanics and Convex Optimization\/}.
  CRC Press, Boca Raton (2011). 

\bibitem[\protect\citeauthoryear{Kanno}{2016a}]{Kan16}
  {Y.~Kanno}:
  {Global optimization of trusses 
    with constraints on number of different cross-sections: 
    a mixed-integer second-order cone programming approach}.
  {\em Computational Optimization and Applications},
  \textbf{63}, 203--236 (2016).

\bibitem[\protect\citeauthoryear{Kanno}{2016b}]{Kan16frame}
  {Y.~Kanno}:
  {Mixed-integer second-order cone programming for global 
    optimization of compliance of frame structure 
    with discrete design variables}.
  {\em Structural and Multidisciplinary Optimization},
  \textbf{54}, 301--316 (2016).

\bibitem[\protect\citeauthoryear{Kanno}{2019}]{Kan19admm}
  {Y.~Kanno}:
  {Alternating direction method of multipliers as simple heuristic 
    for topology optimization of a truss with uniformed member cross-sections}.
  {\em Journal of Mechanical Design (ASME)},
  \textbf{141}, Article No.~011403 (2019).


\bibitem[\protect\citeauthoryear{Kanno and Fujita}{2018}]{KF18}
  {Y.~Kanno, S.~Fujita}:
  {Alternating direction method of multipliers 
    for truss topology optimization with limited number of nodes: 
    a cardinality-constrained second-order cone programming approach}.
  {\em Optimization and Engineering},
  \textbf{19}, 327--358 (2018).

\bibitem[\protect\citeauthoryear{Kanno {\em et al.\/}}{2019}]{KOG19}
  {Y.~Kanno, M.~Ohsaki, J.~K.~Guest}:
  {Unified treatment of some different fabrication-cost functions 
    in truss topology optimization}.
  {\em Proceedings of International Association for Shell and 
    Spatial Structures (IASS) Annual Symposium 2019 and 
    Structural Membranes 2019---Form and Force},
  Barcelona, Spain, October 7--10, (2019).

\bibitem[\protect\citeauthoryear{Kanno and Takewaki}{2006}]{KT06}
  {Y.~Kanno, I.~Takewaki}:
  {Sequential semidefinite program for maximum robustness design of 
    structures under load uncertainties}.
  {\em Journal of Optimization Theory and Applications},
  \textbf{130}, 265--287 (2006).

\bibitem[\protect\citeauthoryear{Kanno and Yamada}{2017}]{KY17}
  {Y.~Kanno, H.~Yamada}:
  {A note on truss topology optimization under self-weight load:
    mixed-integer second-order cone programming approach}.
  {\em Structural and Multidisciplinary Optimization},
  \textbf{56}, 221--226 (2017).

\bibitem[\protect\citeauthoryear{Kanzow {\em et al.\/}}{2005}]{KNKF05}
  {C.~Kanzow, C.~Nagel, H.~Kato, M.~Fukushima}:
  {Successive linearization methods for nonlinear semidefinite programs}.
  {\em Computational Optimization and Applications},
  \textbf{31}, 251--273 (2005). 

\bibitem[\protect\citeauthoryear{Kim~{\em et al.\/}}{2001}]{KPC01}
  {N.~H.~Kim, Y.~H.~Park, K.~K.~Choi}:
  {Optimization of a hyperelastic structure with multibody contact 
    using continuum-based shape design sensitivity analysis}.
  {\em Structural and Multidisciplinary Optimization},
  \textbf{21}, 196--208 (2001).

\bibitem[\protect\citeauthoryear{Kim~{\em et al.\/}}{2002}]{KYC02}
  {N.~H.~Kim, K.~Yi, K.~K.~Choi}:
  {A material derivative approach in design sensitivity analysis 
    of three-dimensional contact problems}.
  {\em International Journal of Solids and Structures},
  \textbf{39}, 2087--2108 (2002).


\bibitem[\protect\citeauthoryear{Klarbring~{\em et al.\/}}{1995}]{KPR95}
  {A.~Klarbring, J.~Petersson, M.~R\"{o}nnqvist}:
  {Truss topology optimization including unilateral contact}.
  {\em Journal of Optimization Theory and Applications},
  \textbf{87}, 1--31 (1995).

\bibitem[\protect\citeauthoryear{Klarbring and R\"{o}nnqvist}{1995}]{KR95}
  {A.~Klarbring, M.~R\"{o}nnqvist}:
  {Nested approach to structural optimization in nonsmooth mechanics}.
  {\em Structural Optimization},
  \textbf{10}, 79--86 (1995).

\bibitem[\protect\citeauthoryear{Klarbring and Str\"{o}mberg}{2012}]{KS12}
  {A.~Klarbring, N.~Str\"{o}mberg}:
  {A note on the min-max formulation of stiffness optimization 
    including non-zero prescribed displacements}.
  {\em Structural and Multidisciplinary Optimization},
  \textbf{45}, 147--149 (2012).


\bibitem[\protect\citeauthoryear{Ko\v{c}vara~{\em et al.\/}}{1998}]{KZZ98}
  {M.~Ko\v{c}vara, M.~Zibulevsky, J.~Zowe}:
  {Mechanical design problems with unilateral contact}.
  {\em Mathematical Modeling and Numerical Analysis},
  \textbf{32}, 255--281 (1998).

\bibitem[\protect\citeauthoryear{Lawry and Maute}{2015}]{LM15}
  {M.~Lawry, K.~Maute}:
  {Level set topology optimization of problems with sliding contact interfaces}.
  {\em Structural and Multidisciplinary Optimization},
  \textbf{52}, 1107--1119 (2015).

\bibitem[\protect\citeauthoryear{Luo~{\em et al.\/}}{2016}]{LLK16}
  {Y.~Luo, M.~Li, Z.~Kang}:
  {Topology optimization of hyperelastic structures 
    with frictionless contact supports}.
  {\em International Journal of Solids and Structures},
  \textbf{81}, 373--382 (2016).

\bibitem[\protect\citeauthoryear{Luo~{\em et al.\/}}{1996}]{LPR96}
  {Z.-Q.~Luo, J.-S.~Pang, D.~Ralph}:
  {\em Mathematical Programs with Equilibrium Constraints}.
  Cambridge University Press, Cambridge (1996).

\bibitem[\protect\citeauthoryear{Mankame and Ananthasuresh}{2004}]{MA04}
  {N.~D.~Mankame, G.~K.~Ananthasuresh}:
  {Topology optimization for synthesis of contact-aided 
    compliant mechanisms using regularized contact modeling}.
  {\em Computers and Structures},
  \textbf{82}, 1267--1290 (2004).

\bibitem[\protect\citeauthoryear{Martins and Raous}{2002}]{MR02}
  {J.~A.~C.~Martins, M.~Raous (eds.)}:
  {\em Friction and Instabilities}.
  Springer-Verlag, Wien (2002).



\bibitem[\protect\citeauthoryear{Niu~{\em et al.\/}}{2011}]{NXC11}
  {F.~Niu, S.~Xu, G.~Cheng}:
  {A general formulation of structural topology optimization 
    for maximizing structural stiffness}.
  {\em Structural and Multidisciplinary Optimization},
  \textbf{43}, 561--572 (2011).

\bibitem[\protect\citeauthoryear{Petersson}{1996}]{Pet96}
  {J.~Petersson}:
  {On stiffness maximization of variable thickness sheet with unilateral contact}.
  {\em Quarterly of Applied Mathematics},
  \textbf{54}, 541--550 (1996).


\bibitem[\protect\citeauthoryear{Petersson and Patriksson}{1997}]{PP97}
  {J.~Petersson, M.~Patriksson}:
  {Topology optimization of sheets in contact by a subgradient method}.
  {\em International Journal for Numerical Methods in Engineering},
  \textbf{40}, 1295--1321 (1997).

\bibitem[\protect\citeauthoryear{P\'{o}lik}{2005}]{Pol05}
  {I.~P\'{o}lik}:
  {\em Addendum to the SeDuMi User Guide: Version 1.1\/}.
  Technical Report, 
  Advanced Optimization Laboratory, McMaster University, Hamilton (2005).
  \url{http://sedumi.ie.lehigh.edu/sedumi/}

\bibitem[\protect\citeauthoryear{Rockafellar}{1970}]{Roc70}
  {R.~T.~Rockafellar}:
  {\em Convex Analysis}.
  Princeton University Press, Princeton (1970).

\bibitem[\protect\citeauthoryear{Stavroulakis}{1995}]{Sta95}
  {G.~E.~Stavroulakis}:
  {Optimal prestress of cracked unilateral structures: finite element 
    analysis of an optimal control problem for variational inequalities}.
  {\em Computer Methods in Applied Mechanics and Engineering},
  \textbf{123}, 231--246 (1995).

\bibitem[\protect\citeauthoryear{Str\"{o}mberg}{2010}]{Str10}
  {N.~Str\"{o}mberg}:
  {Topology optimization of structures with manufacturing 
    and unilateral contact constraints 
    by minimizing an adjustable compliance--volume product}.
  {\em Structural and Multidisciplinary Optimization},
  \textbf{42}, 341--350 (2010).

\bibitem[\protect\citeauthoryear{Str\"{o}mberg}{2012}]{Str12}
  {N.~Str\"{o}mberg}:
  {Topology optimisation of bodies in unilateral contact 
    by maximizing the potential energy}.
  {\em Proceedings of the 11th International Conference
    on Computational Structures Technology},
  Paper No.~237, 
  Dubrovnik, Croatia (2012).

\bibitem[\protect\citeauthoryear{Str\"{o}mberg and Klarbring}{2010}]{SK10}
  {N.~Str\"{o}mberg, A.~Klarbring}:
  {Topology optimization of structures in unilateral contact}.
  {\em Structural and Multidisciplinary Optimization},
  \textbf{41}, 57--64 (2010).

\bibitem[\protect\citeauthoryear{Sturm}{1999}]{Stu99}
  {J.~F.~Sturm}:
  {Using SeDuMi 1.02, a MATLAB toolbox for optimization over symmetric cones}.
  {\em Optimization Methods and Software},
  \textbf{11--12}, 625--653 (1999).

\bibitem[\protect\citeauthoryear{Tin-Loi}{1999}]{Tin99}
  {F.~Tin-Loi}:
  {On the numerical solution of a class of unilateral contact 
    structural optimization problems}.
  {\em Structural Optimization},
  \textbf{17}, 155--161 (1999).

\bibitem[\protect\citeauthoryear{Wriggers}{2006}]{Wri06}
  {P.~Wriggers}:
  {\em Computational Contact Mechanics (2nd ed.)}.
  Springer-Verlag, Berlin (2006).

\end{thebibliography}
\end{document}